\newcommand{\cx}{{\mathbb{C}}}
\newcommand{\G}{\Gamma}
\newcommand{\D}{\mathbb D}
\newcommand{\C}{\mathbb C}
\newcommand{\R}{\mathbb R}
\newcommand{\N}{\mathbb N}
\newcommand{\PP}{\mathbb P}
\newtheorem{theorem}{Theorem}[section]
\newtheorem{lemma}[theorem]{Lemma}
\newtheorem{prop}[theorem]{Proposition}
\newtheorem{cor}[theorem]{Corollary}
\theoremstyle{definition}
\newtheorem{defn}[theorem]{Definition}
\newtheorem{rem}[theorem]{Remark}
\newtheorem{ex}[theorem]{Example}
\begin{document}
\title{Germs of singular Levi-flat  hypersurfaces and  holomorphic foliations}
\author{Rasul Shafikov* and Alexandre Sukhov**}
\begin{abstract}

It is shown that the Levi foliation of a real analytic Levi-flat hypersurface
extends to a $d$-web near a nondicritical singular point and admits a 
multiple-valued meromorphic first integral.
\end{abstract}

\maketitle

\let\thefootnote\relax\footnote{MSC: 37F75,34M,32S,32D.
Key words: Levi-flat hype surfaces, foliations, webs
}

* Department of Mathematics, the University of Western Ontario, London, Ontario, N6A 5B7, Canada,
e-mail: shafikov@uwo.ca. The author is partially supported by the Natural Sciences and Engineering 
Research Council of Canada.

**Universit\'e des Sciences et Technologies de Lille, 
U.F.R. de Math\'ematiques, 59655 Villeneuve d'Ascq, Cedex, France,
e-mail: sukhov@math.univ-lille1.fr. The author is partially supported by Labex CEMPI.

\section{Introduction}

This paper is concerned with local properties of real analytic Levi-flat hypersurfaces near singular points. Levi-flat hypersurfaces
arise naturally in the theory of holomorphic foliations and differential equations, in particular, in the study of minimal sets for foliations. They were studied recently by several authors 
from different points of view (see, e.g., \cite{Be,Bru1,Bru2,BuGo,CerNeto,CerSad,Le}). 

Let $z=(z_1, \dots, z_n)$, $z_j = x_j + i y_j$, be the standard coordinates in $\mathbb C^n$.
A {\it real analytic hypersurface} $\Gamma$ in a domain $\Omega \subset \C^n$ is a closed real analytic subset of $\Omega$ of dimension 
$2n-1$. This means, in particular, that for every point $q \in \Omega$ there exists an open neighbourhood $U$ of $q$ in $\Omega$ and a 
real analytic function $\rho:U \to \R$ such that 
\begin{eqnarray}
\label{hypeq}
\Gamma \cap U = \rho^{-1}( 0 ). 
\end{eqnarray}
We say that $\Gamma$ is {\it real algebraic} if $\rho$ is a real polynomial. It is well known that the regular part of a Levi-flat 
hypersurface $\G$, which we denote by $\G^*$, is foliated by complex hypersurfaces forming the so-called {\it Levi foliation} $\mathcal L$. 
Global or local extension of this foliation to the ambient space is an important question, see, e.g.,~\cite{Bru1,Bru2,CerNeto,Fe} 
for recent results in this direction. Brunella~\cite{Bru1} gave an example of a Levi-flat hypersurface in $\cx^2$, singular at the origin, 
such that the Levi foliation 
extends to a neighbourhood of the origin as a {\it singular web}, but not as a foliation, see Example~\ref{e.B}. In this paper we 
are interested in finding general sufficient conditions for the extension of the Levi foliation $\mathcal L$ as a singular holomorphic $d$-web near 
a singular point. Section~\ref{s.webs} is dedicated to a detailed discussion of webs, but loosely speaking, singular holomorphic webs can 
be viewed as foliations with branching. A singular point $p$ of a Levi-flat hypersurface is called {\it dicritical} if  infinitely many leaves of the 
Levi foliation have $p$ in their closure. Our  first result is the following.

\begin{theorem}\label{t.1}
Let $\G\subset \Omega$ be an irreducible Levi-flat real analytic hypersurface in a domain $\Omega\subset \cx^n$,
$n\ge 2$, and $0\in \overline{\G^*}$. Assume that at least one of the following conditions holds:
\begin{itemize}
\item[(a)] $0\in \G$ is not a dicritical singularity. 
\item[(b)] $\G$ is a real algebraic hypersurface.
\end{itemize}
Then there exist a neighbourhood $U$ of the 
origin and a singular holomorphic $d$-web $\mathcal W$ in $U$ such that $\mathcal W$ extends the Levi foliation $\mathcal L$. 
Furthermore,  $\mathcal W$ admits a multiple-valued meromorphic first integral in $U$.
\end{theorem}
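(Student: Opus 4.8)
The plan is to realize the web as the conormal variety obtained by complexifying the Levi foliation and then projecting to $\cx^n_z$, invoking the non-dicriticality (or algebraicity) hypothesis only at the end to guarantee finiteness. First I would complexify: writing $\rho(z,\bar z)$ for a local defining function, I replace $\bar z$ by an independent variable $w$ to obtain a holomorphic germ $h(z,w)$ and the complexification $\mathcal M=\{h(z,w)=0\}\subset\cx^n_z\times\cx^n_w$, an irreducible complex hypersurface germ at $0$ (irreducibility of $\Gamma$ passing to the relevant component of $\mathcal M$). I would recall the Segre varieties $Q_a=\{z:h(z,\bar a)=0\}$ and their duality $z\in Q_a\Leftrightarrow a\in Q_z$, together with the standard fact that for $a\in\Gamma^*$ the leaf of $\mathcal L$ through $a$ is an open piece of the smooth hypersurface $Q_a$. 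Since $Q_a$ then depends only on the leaf containing $a$, Levi-flatness forces the family $\{Q_a\}$ to be a one-parameter family of complex hypersurfaces $\{\Sigma_t\}$, the leaf space being one-dimensional.

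Second, I would build the web as a conormal variety. Consider the holomorphic map
\[
\Psi:\mathcal M\longrightarrow\cx^n_z\times\PP^{n-1}_\xi,\qquad (z,w)\longmapsto\bigl(z,\ [\partial_z h(z,w)]\bigr),
\]
and set $\mathcal V=\overline{\Psi(\mathcal M)}$. For a generic real analytic hypersurface $\Psi$ is dominant; the point is that Levi-flatness makes it drop rank. Indeed, $[\partial_z h(z,w)]$ is the conormal to $Q_{\bar w}=\Sigma_{t(w)}$ at $z$, and since on $\mathcal M$ the point $z$ lies on $\Sigma_{t(w)}$, this conormal depends on $w$ only through the one-dimensional parameter $t(w)$. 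Hence $\Psi$ factors through the $n$-dimensional incidence variety $\{(z,t):z\in\Sigma_t\}$, so $\dim\mathcal V=n$ and the projection $\pi:\mathcal V\to\cx^n_z$ is generically finite of some degree $d$. This $\mathcal V$ is the candidate $d$-web: its fibre over a generic $z$ records the conormals of the finitely many members $\Sigma_t$ passing through $z$, and over $\Gamma^*$ one of these conormals is $[\partial_z h(z,\bar z)]=[\partial\rho]$, so $\mathcal V$ restricts to $\mathcal L$ on $\Gamma^*$. The integrability of each local sheet is inherited from the fact that the sheets are conormals of honest complex hypersurfaces $\Sigma_t$.

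Third — and this is where I expect the real difficulty — I would prove that $d<\infty$ and that $\mathcal V$ is a genuine analytic web near $0$, which is exactly the content of the hypotheses. The number $d$ is the number of values $t$ with $z\in\Sigma_t$ for generic $z$, equivalently the number of branches of the multivalued first integral obtained by analytically continuing a local first integral of $\mathcal L$ around the singular set $\Sigma=\mathrm{Sing}(\Gamma)$, which has real codimension $\ge 2$. Under (a), non-dicriticality means only finitely many leaves have $0$ in their closure; I would use this to show that $\pi$ is proper over a neighbourhood of $0$, so that the monodromy of the first integral is finite and $\mathcal V$ is analytic by Remmert's proper mapping theorem. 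Under (b), algebraicity makes all Segre varieties algebraic of bounded degree, so $\{\Sigma_t\}$ is an algebraic family, $\pi$ is automatically finite, and $d$ is bounded by the degree of this family. Controlling the Segre family as one approaches $\Sigma$ and ruling out accumulation of infinitely many sheets is the main obstacle, and it is precisely here that one cannot dispense with some hypothesis (cf. Example~\ref{e.B}).

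Finally, I would extract the first integral and check meromorphy. The finitely many branches $f_1,\dots,f_d$ of the continued local first integral are permuted by monodromy, so their elementary symmetric functions are single-valued and holomorphic off $\Sigma$; being locally bounded (respectively of controlled growth in the algebraic case) across the thin set $\Sigma$, they extend meromorphically across it by the Riemann and Levi extension theorems. Thus the multivalued $F$ with branches $f_j$ satisfies a monic degree-$d$ polynomial relation with meromorphic coefficients, i.e.\ $F$ is a multiple-valued meromorphic first integral, and $\mathcal W=\mathcal V=\{dF=0\}$ is the desired singular holomorphic $d$-web extending $\mathcal L$. I would close by exhibiting, on $U\setminus\Sigma$, the factorization of the defining symmetric $d$-form $\prod_j df_j$ into the $d$ foliations, confirming that $\mathcal W$ is a web in the sense of Section~\ref{s.webs}.
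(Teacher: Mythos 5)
Your proposal is essentially Brunella's conormal-lift construction, and it founders at exactly the two points the paper is designed to get around. First, the dimension-drop argument rests on the claim that $Q_a$ ``depends only on the leaf containing $a$,'' which is false: for $a\in\Gamma^*$ only the distinguished component $S_a$ of $Q_a$ is determined by the leaf (Corollary~\ref{Segre+Levi}(a)); the remaining components of $Q_a$ vary as $a$ moves along $\mathcal L_a$ (compute $Q_w$ in Example~\ref{e.s}: the component through $z_1+1/\sqrt{z_2}=-(\bar w_1+1/\sqrt{\bar w_2})$ is leaf-constant, but its partner with the opposite sign is not), so the full Segre family is $n$-parameter, not one-parameter, and your map $\Psi$ does not visibly factor through an $n$-dimensional incidence variety. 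Relatedly, the leaf space of $\mathcal L$ is \emph{real} one-dimensional; the complex one-parameter family $\{\Sigma_t\}$ you use must be constructed, which is what the paper's antiholomorphically parametrized transversal line $A$ of Proposition~\ref{l.A'} accomplishes (its nontrivial content, Lemma~\ref{reg} plus a connectedness argument, shows every leaf is caught by $\{Q_{w}\colon w\in A\}$, including through points of $\Gamma^{sng}$).

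Second, and more fatally, the analyticity of $\mathcal V=\overline{\Psi(\mathcal M)}$ over a full neighbourhood of $0$ is asserted, not proved: $\Psi$ is not proper (a fixed point of $\cx^n_z\times\PP^{n-1}$ is hit by $w$ escaping to the boundary along a whole Segre component), so Remmert's theorem does not apply, and ``monodromy of the first integral is finite, hence $\mathcal V$ is analytic'' presupposes the first integral whose existence is the conclusion. This is precisely the defect the paper's closing Remark attributes to Brunella's approach: the complexification of the conormal lift exists set-theoretically but need not be an analytic set with good projection near the singular point. The paper instead makes non-dicriticality do concrete work: $\dim_\C Q_0=n-1$ yields a line with $A\cap Q_0=\{0\}$, so the projection of $Y=\{\hat\rho(z,t)=0\}$ is proper, Weierstrass preparation gives the polynomial $H(z,t)$ of~\eqref{e.H}, and elimination of $t$ via the resultants $R(H,G_j)$ produces equations \emph{polynomial in $p_j$} --- analyticity of $W$ and its compactification in $\PP T^*_n$ come for free, with $Y$ itself furnishing the first integral as a meromorphic correspondence (no symmetric-function gluing across the merely semianalytic, real-codimension-two set $\Gamma^{sng}$, where your Riemann/Levi extension step is not routine). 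Your treatment of case (b) has the same flaw: at a dicritical origin the relevant projection has a positive-dimensional fibre over $0$ (cf.\ Example~\ref{e.BG}, where $\pi^{-1}(0)\cong\cx P^{n-1}$), so finiteness is not ``automatic'' from algebraicity; the paper's fix is that $H(z,t)$ is already polynomial in $t$, so the resultant construction proceeds without Weierstrass preparation.
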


We say that a web $\mathcal W$ extends the foliation $\mathcal L$ if every leaf of $\mathcal L$ is a leaf of $\mathcal W$.
By a first integral of $\mathcal W$ we mean a meromorphic correspondence which is constant along the leaves of $\mathcal W$, see
Section~\ref{s.webs} for details. We note that under some additional assumptions on the singular locus of $\Gamma$, part (a) of 
our result was obtained recently by Fern\'andez-P\'erez~\cite{Fe}. Example~\ref{e.s} below gives a singular Levi-flat hypersurface which satisfies 
condition (a) of Theorem~\ref{t.1} but not those of~\cite{Fe}.  

Our approach is rather constructive, especially under condition (b) in Theorem~\ref{t.1}. In many cases 
one can write down explicitly the $d$-web that gives the extension of the Levi foliation, see Section~\ref{s.examples} for
relevant examples. The key point of our approach lies in the connection between singular webs and first order analytic partial differential equations,
although we do not claim any particular originality here. Presently, the most commonly used definition of webs is through the geometry the 
projectivized cotangent bundle. We reconstruct the connection between geometry of singular webs and analytic PDEs through compactification of 
the 1-jet bundle of functions on $\cx^{n-1}$ and its identification with the projectivized cotangent bundle of $\cx^n$, see
Section~\ref{s.webs} for details. 

We also have another version of Theorem \ref{t.1} corresponding to an alternative way to describe Levi-flat hypersurfaces. 
Let $\Omega$ be a neighbourhood of the origin in $\C^n$. Denote by ${\mathbb S} = \{ \zeta = e^{i\tau}: \tau \in [0,2\pi] \}$ 
the unit circle in $\C$. Let $H:\Omega \times {\mathbb S} \longrightarrow \C$ be a real analytic (complex-valued) function 
which is holomorphic in the variable $z \in \Omega$ for every $\zeta \in {\mathbb S}$. Consider its zero locus
\begin{eqnarray}
\label{subanalytic0}
\hat\Gamma = H^{-1}(0) ,
\end{eqnarray}
which is a real analytic subset of $\Omega \times \mathbb S$. Suppose that the function $H$ has the maximal rank 2 on an open dense subset of $\hat\Gamma$. Consider now the set
\begin{eqnarray}\label{subanalytic1}
\Gamma = \{ z \in \Omega: H(z,\zeta) = 0 \, \,\,\, \mbox{for some} \,\,\,\, \zeta {\in \mathbb S} \} .
\end{eqnarray} 
In general $\Gamma$ is only a subanalytic subset of $\Omega$ since it coincides with the image of $\hat\Gamma$ under the canonical projection into $\Omega$. We call such $\Gamma$ a {\it Levi-flat subanalytic hypersurface} in $\Omega$. We say that $\Gamma$ defined by (\ref{subanalytic1}) is {\it algebraically  parametrized} if the function $H$ admits representation
\begin{eqnarray}
\label{rational}
H(z,\zeta) = \sum_{k=-N}^N h_k(z)\zeta^k, \ \ \zeta \in \mathbb S, \ \ h_j \in \mathcal O(\Omega),
\end{eqnarray}
and $H(\cdot, \zeta)$ is not a constant.  We use the same notation for the regular part of $\Gamma$ and the Levi foliation as in the real analytic case. Our second result is the following

\begin{theorem}
\label{t.2}
Let $\G\subset \Omega$ be a   Levi-flat subanalytic hypersurface in a domain $\Omega\subset \cx^n$, $n\ge 2$. Assume that $\G$ is algebraically parametrized, and $0\in \overline{\G^*}$. Then there exist a neighbourhood $U$ of the 
origin and a singular holomorphic $d$-web $\mathcal W$ in $U$ such that $\mathcal W$ extends the Levi foliation $\mathcal L$. 
Furthermore,  $\mathcal W$ admits a multiple-valued meromorphic first integral in $U$.
\end{theorem}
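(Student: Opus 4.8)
The plan is to exploit the algebraic parametrization directly: it already encodes a complete integral of the first-order PDE whose solutions are the leaves of $\mathcal L$, so both the web and its first integral can be read off from $H$ essentially by eliminating the parameter $\zeta$. First I would clear denominators by setting
\[
P(z,\zeta) = \zeta^N H(z,\zeta) = \sum_{m=0}^{2N} h_{m-N}(z)\,\zeta^m,
\]
a polynomial of degree $2N$ in $\zeta$ with coefficients $h_{m-N}\in\OO(\Omega)$; since $H(\cdot,\zeta)$ is nonconstant, $P\not\equiv 0$. For $\zeta_0\in\S$ the leaf of $\mathcal L$ through a regular point is a component of the complex hypersurface $\{z: H(z,\zeta_0)=0\}=\{z:P(z,\zeta_0)=0\}$, so I enlarge the parameter domain from $\S$ to $\C$ and consider the complexified incidence set $\hat V=\{(z,\zeta)\in\Omega\times\C: P(z,\zeta)=0\}$, whose projection $\pi:\hat V\to\Omega$ is a branched covering of degree $\le 2N$ after shrinking $\Omega$ (the leaves of $\mathcal L$ correspond to the sheets with $\zeta\in\S$). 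Complexifying $\zeta$ is precisely what turns the real one-parameter family into a holomorphic object.

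For the first integral I would simply take the coordinate $\zeta$ as a function on $\hat V$ and push it down through $\pi$. Because $P(z,\zeta)=0$ is a monic (after dividing by the leading coefficient $h_N$) polynomial relation over the field of meromorphic functions, $\zeta=\zeta(z)$ is an algebraic, hence multiple-valued meromorphic, correspondence, with poles confined to $\{h_N=0\}$. It is constant along the leaves of $\mathcal L$ by construction, since each leaf sits over a single value $\zeta_0$; thus $\zeta$ is the desired multiple-valued meromorphic first integral.

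To produce the web I would use the map into the projectivized cotangent bundle
\[
\nu:\hat V\dashrightarrow \PP(T^*\C^n),\qquad (z,\zeta)\longmapsto \bigl(z,[\partial_{z_1}H(z,\zeta):\cdots:\partial_{z_n}H(z,\zeta)]\bigr),
\]
which is defined wherever $\partial_z H(z,\zeta)\neq 0$; the rank-$2$ hypothesis on $H$ guarantees this on a dense open subset of $\hat V$ and that the leaves are genuine complex hypersurfaces. Let $\mathcal W$ be the closure of the image of $\nu$. Eliminating $\zeta$ and the proportionality factor from $P(z,\zeta)=0$ together with the minors $\xi_i\,\partial_{z_j}H-\xi_j\,\partial_{z_i}H=0$ produces, via a resultant in $\zeta$, a polynomial $F(z,\xi)$ homogeneous of degree $d\le 2N$ in $\xi$, and $\{F=0\}$, taken with its reduced structure, is a singular holomorphic $d$-web. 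By construction the conormal direction $[\partial_z H(\cdot,\zeta_0)]$ of each leaf of $\mathcal L$ lies on $\{F=0\}$, so $\mathcal W$ extends $\mathcal L$; in an affine chart this is exactly the PDE $F(z,\partial u)=0$ for which the family $\{H(z,\zeta)=0\}_\zeta$ is a complete integral with integration constant $\zeta$.

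The main obstacle is verifying that $\mathcal W$ is a bona fide $d$-web rather than a degenerate object: one must show that $F\not\equiv 0$, that it is generically square-free in $\xi$, and that each fiber of $\PP(T^*\C^n)\to\Omega$ meets $\{F=0\}$ in finitely many points, i.e. that $\nu$ is generically finite-to-one over $\Omega$. Here the rank-$2$ condition together with the finiteness of $\pi$ do the essential work, ruling out the dicritical degeneration in which infinitely many leaves accumulate at the origin; the remaining care is in extending $\nu$ meromorphically across the thin sets $\{h_N=0\}$ and $\{\partial_z H=0\}$, where the first integral acquires poles and the conormal map is a priori undefined.
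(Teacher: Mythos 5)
Your construction is, at its core, the paper's: clear the denominator to get $\hat H(z,\zeta)=\zeta^N H(z,\zeta)$, complexify $\zeta$, eliminate $\zeta$ by a resultant against the derivative conditions (your conormal minors $\xi_i\partial_{z_j}H-\xi_j\partial_{z_i}H=0$ are just the homogeneous form of the paper's total-derivative equations $G_j=D_j\hat H=0$ from \eqref{e.diff}, in the chart where $z_n$ is the dependent variable), and obtain the first integral by projectivizing the incidence variety in the $\zeta$-direction. However, there is a genuine error at the point you yourself flag as the main obstacle. Your claim that $\pi:\hat V\to\Omega$ is a branched covering of degree $\le 2N$ after shrinking $\Omega$ is false in general: take $H(z,\zeta)=z_1-\zeta z_2$ (so $\G=\{|z_1|=|z_2|\}$, an admissible algebraically parametrized Levi-flat). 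Then $\pi^{-1}(0)=\{0\}\times\C_\zeta$ is positive-dimensional, $\pi$ is not proper, and no shrinking of $\Omega$ repairs this. Theorem~\ref{t.2} makes no nondicriticality assumption, and the paper's proof explicitly notes that the projection $\pi:Y\to\Omega$ fails to be proper precisely for this reason (which is why the Weierstrass Preparation Theorem is unavailable and one works with the polynomial $\hat H$ directly). Since you invoke ``the finiteness of $\pi$'' as what ``does the essential work, ruling out the dicritical degeneration,'' your verification that $F\not\equiv 0$ and that the web has generically finite fibres collapses exactly in the dicritical case, which is the only case where the theorem is not already covered by the nondicritical argument.

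The correct mechanism, which requires no properness of $\pi$ at all, is algebraic: first reduce $\hat H$ to a square-free polynomial in $\zeta$ by dividing by $\gcd(\hat H,\partial_\zeta\hat H)$ (this does not change the zero set), and then show that the resultant $R(\hat H, G_j)$ cannot vanish identically, because that would force $\hat H$ and $G_j$ to share a nonconstant factor $H_m$, whence $H_m$ would divide $D_jH_m$, which has strictly smaller degree in $\zeta$ --- the common-factor/degree argument of Lemma~\ref{l.R'}. You never perform the square-free reduction of $\hat H$ in $\zeta$ (without it the resultant can vanish identically even in the nondicritical case), and you substitute a false finiteness statement for the resultant lemma. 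The first-integral half of your argument does survive, but for a different reason than the one you give: properness of the correspondence onto $\Omega$ is restored only after compactifying the $\zeta$-fibre to $\C P^1$ (compactness of the fibre makes the projection proper, and $P(z,\cdot)\not\equiv 0$ for generic $z$ gives generic finiteness); your passing remark that $\zeta$ is an ``algebraic correspondence'' effectively does this, but the earlier branched-covering claim should be deleted, not relied upon.
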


The reason to consider Levi-flat hypersurfaces defined by (\ref{subanalytic1}) is that this gives a very convenient way to construct Levi-flat hypersurfaces. Indeed, in terms of  (\ref{hypeq}) the Levi-flatness means that $\rho$ is a solution of a non-linear PDE operator of Monge-Amp\`ere type. One can view (\ref{subanalytic1}) as an effective way to describe solutions of this operator.

\section{Background: Levi-flat hypersurfaces and Segre Varieties}

In this section we provide relevant background material on singular Levi-flat hypersurfaces and their Segre varieties, and 
establish some basic general facts concerning their geometry. 

\subsection{Levi-flat hypersurfaces}  The neighbourhood $U$ in (\ref{hypeq}) can be always chosen in the form of a polydisc 
\begin{eqnarray}
\label{polydisc}
\Delta(p,\varepsilon) = \{ z \in \C^n: \vert z_j - q_j \vert < \varepsilon \}
\end{eqnarray}
centred at $q$ and of radius $ \varepsilon > 0$. Without loss of generality we may assume that $q = 0$. Then for $\varepsilon$ small enough, the function $\rho$ admits a
 convergent in $U$ Taylor  expansion 
\begin{eqnarray}
\label{exp}
\rho(z,\overline z) = \sum_{IJ} c_{IJ}z^I \overline{z}^J, \ c_{IJ}\in\cx, \ \ I,J \in \N^n.
\end{eqnarray}
Note that coefficients $c_{IJ}$ satisfy the condition
\begin{eqnarray}
\label{coef}
\overline c_{IJ} = c_{JI},
\end{eqnarray}
which is imposed by the reality of $\rho$. In local questions we can 
always assume that  (\ref{hypeq}) is a global defining equation of $\Gamma$ in a neighbourhood $\Omega$ of the origin. We may 
shrink $\Omega$ if necessary when we are dealing with germs of real hypersurfaces. 

In this paper we  adopt the following terminology. A real analytic hypersurface $\Gamma$ is called {\it irreducible} in $\Omega$ if it cannot be 
represented as the union $\Gamma = \Gamma_1 \cup \Gamma_2$ of two real analytic hypersurfaces $\Gamma_j$ in $\Omega$. 
We call a point $q \in \G$ a {\it regular} point, if $\G$ is a real analytic submanifold of dimension $2n-1$ in a neighbourhood of $q$, i.e., 
a smooth analytic hypersurface near $q$. The union of all regular points form a regular locus denoted by $\G^*$. By definition of a 
Levi-flat hypersurface, $\G^*$ is an open non-empty subset of $\G$. Its complement 
$\Gamma^{sng}:= \Gamma \setminus \Gamma^{*}$ is called the {\it singular locus} of $\Gamma$. 
Note that this convention is different from the usual definition of a regular  point in the semi-analytic or subanalytic geometry. There, 
a similar notion is less restrictive and a real analytic set is allowed to be a submanifold of {\it some} dimension near a regular point. 
The classical example of the Whitney umbrella (see, for instance, \cite{Lo}) shows that an irreducible real analytic set does not always 
have pure dimension. By our definition, points of $\Gamma$, where $\Gamma$ is a submanifold of dimension smaller than $2n-1$, 
belong to the singular locus. For that reason $\Gamma^*$ may be not dense in $\Gamma$. Note that $\G^{sng}$ is a closed 
semi-analytic subset of $\Gamma$ (possibly empty) of real dimension at most $2n-2$. In what follows we always assume that
the hypersurface $\G$ that we consider is irreducible.

Let $\Gamma$ be a real analytic hypersurface in $\Omega$. If $q$ is a regular point of $\Gamma$, then there exist a neighbourhood
$U$ of $q$ and a function $\rho$ real analytic in $U$ such that \eqref{hypeq} holds, and the gradient of $\rho$ does not vanish: 
$\nabla \rho \ne 0$ in $U$.
For $q\in \G^*$ consider the complex tangent space $H_q(\Gamma):= T_q(\Gamma) \cap JT_q(\Gamma)$. Here $J$ denotes the standard complex structure of $\cx^n$. The {\it Levi form} 
of $\Gamma$ is a Hermitian quadratic form defined on $H_q(\Gamma)$ by
$$
L_q(v) = \sum_{k,j} \frac{\partial^2\rho}{\partial z_k \partial \overline{z}_j}(q) v_k \overline{v}_j, \,\,\,\,\,v \in H_q(\Gamma).
$$
A real analytic hypersurface $\Gamma$ is called {\it Levi-flat} if its Levi form vanishes identically at every regular point of $\Gamma$. 
It is well known that for every point  $q \in \Gamma^*$, there exists a local biholomorphic change of coordinates centred at $q$ such 
that in the new coordinates $\Gamma$ in some neighbourhood $U$ of  $q = 0$ has the form 
\begin{eqnarray}
\label{NormForm}
\{ z \in U: z_n + \overline{z}_n = 0 \} .
\end{eqnarray}
Hence, $\Gamma \cap U$ is foliated by complex hyperplanes $\{z_n = c, \ c \in \mathbb R\}$. This foliation is called {\it the Levi 
foliation} of $\Gamma^*$. Every leaf is tangent to the complex tangent space of $\Gamma^*$. Clearly, it extends as a holomorphic 
codimension one foliation to a neighbourhood of $q$ in the ambient space : it suffices to allow $c$ to be a complex constant. 
In general, such simple representation of a Levi-flat hypersurface $\Gamma$ does not exist near 
singular points. The Levi-foliation of $\G^*$  will be denoted  by~$\mathcal L$. 

\subsection{Segre varieties} 
Another important tool used in the paper is the family of the so-called {\it Segre varieties} associated with a real analytic 
hypersurface~$\Gamma$. For a function $\rho$ 
with the expansion (\ref{exp}) this family  is defined using the {\it complexification} of $\rho$ given by 
\begin{eqnarray}\label{compl}
\rho(z,\overline w) = \sum_{IJ} c_{IJ}z^I\overline{w}^J ,
\end{eqnarray}
i.e., we replace the variable $\overline z$ with an independent variable $\overline w$.
We assume that the neighbourhood $U$ of the origin in $\C^{N}$ is chosen so small that the series (\ref{compl}) converges for all $z,w \in U$. 
Then $\rho(z,\overline w)$ is holomorphic in $z \in U$ and antiholomorphic in $w \in U$. In view of (\ref{coef}) one has
\begin{eqnarray}
\label{conj}
\rho(z,\overline w) = \overline{\rho(w,\overline{z})}, \ \forall (z,w) \in U \times U .
\end{eqnarray}

For $w \in U$ consider a complex analytic hypersurface given by
\begin{eqnarray}
\label{Segre1}
Q_w = \{ z \in U : \rho(z,\overline w) = 0 \} .
\end{eqnarray}
It is called the {\it Segre variety} of the point $w$ associated with $\Gamma$. The following properties of Segre varieties are well known, see, 
for instance, \cite{DiPi}.

\begin{prop}\label{SegreProp}
Let $\G$ be a real analytic hypersurface in $\mathbb C^n$, $n>1$. Then
\begin{itemize}
\item[(a)] $z \in Q_z \Longleftrightarrow z \in \Gamma$,
\item[(b)] $z \in Q_w \Longleftrightarrow w \in Q_z$,
\item[(c)] (invariance property) Let $\Gamma$, $\Gamma'$ be real analytic hypersurfaces, 
$q \in \Gamma^*$, $q' \in (\Gamma')^*$, and $U \ni q$, $U' \ni q'$ be small neighbourhoods,  and let
$f:U \to U'$ be a holomorphic map such that $f(\Gamma \cap U) \subset \Gamma' \cap U'$. Then
$$f(Q_w) \subset Q'_{f(w)}$$
for all $w \in U$. In particular, if $f:U \to U'$ is biholomorphic, then $f(Q_w) = Q'_{f(w)}$.
Here $Q_w$ and $Q'_{f(w)}$ are Segre varieties associated with $\Gamma$ and $\Gamma'$ respectively.
\end{itemize}
\end{prop}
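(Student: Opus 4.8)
The plan is to dispatch (a) and (b) directly from the definitions and to reserve the real work for the invariance property (c). For (a), setting $w=z$ in the defining equation of $Q_w$ gives $z\in Q_z \Longleftrightarrow \rho(z,\overline z)=0$, and by \eqref{hypeq} the right-hand side is precisely the condition $z\in\Gamma$. For (b), I would invoke the conjugation symmetry \eqref{conj}: since $\rho(z,\overline w)=\overline{\rho(w,\overline z)}$, the two values vanish simultaneously, so $\rho(z,\overline w)=0 \Longleftrightarrow \rho(w,\overline z)=0$, which reads $z\in Q_w \Longleftrightarrow w\in Q_z$. Both are one-line verifications once the complexification \eqref{compl} is available.

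The substance lies in (c). First I would record the hypothesis analytically: the inclusion $f(\Gamma\cap U)\subset \Gamma'\cap U'$ says exactly that the real-analytic function $z\mapsto \rho'\big(f(z),\overline{f(z)}\big)$ vanishes identically on $\Gamma\cap U=\{\rho=0\}$. The point is to promote this vanishing, which a priori holds only on the real hypersurface $\Gamma$, to a statement along the complex Segre varieties, and the bridge is a divisibility argument. Since $q\in\Gamma^*$ is a regular point, $\nabla\rho$ does not vanish near $q$, so after a real-analytic change of coordinates $\rho$ becomes one of the real coordinates and hence generates the ideal of real-analytic germs vanishing on $\Gamma$. Consequently there is a real-analytic germ $a$ with
\[
\rho'\big(f(z),\overline{f(z)}\big)=a(z,\overline z)\,\rho(z,\overline z)
\]
in a neighbourhood of $q$.

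The final step is to complexify this identity. Replacing $\overline z$ by an independent variable $\overline w$, equivalently writing $\overline{f(w)}=\overline f(\overline w)$ where $\overline f$ denotes the map obtained by conjugating the Taylor coefficients of $f$, both sides become holomorphic in $z$ and antiholomorphic in $w$, so by uniqueness of analytic continuation the identity persists on a full polydisc $U\times U$ about $(q,q)$:
\[
\rho'\big(f(z),\overline{f(w)}\big)=a(z,\overline w)\,\rho(z,\overline w).
\]
Thus whenever $z\in Q_w$, i.e. $\rho(z,\overline w)=0$, the right-hand side vanishes, giving $f(z)\in Q'_{f(w)}$ and hence $f(Q_w)\subset Q'_{f(w)}$; applying the same reasoning to $f^{-1}$ in the biholomorphic case yields the reverse inclusion and so equality. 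I expect the divisibility step to be the main obstacle: one must verify that $\rho$ genuinely generates the ideal of germs vanishing on $\Gamma$ — this is where the regularity of $q$ (as opposed to merely $q\in\overline{\Gamma^*}$) is essential — and that the quotient $a$ is itself real-analytic, so that it complexifies along with the remaining terms.
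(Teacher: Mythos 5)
Your proof is correct, but there is nothing in the paper to compare it against: Proposition~\ref{SegreProp} is stated there as well known, with a citation to \cite{DiPi}, and no internal proof is given. What you have written is the standard argument from the reflection-principle literature, so you have in effect supplied the proof the paper delegates to its reference. Parts (a) and (b) are indeed one-line consequences of \eqref{Segre1}, \eqref{hypeq} and the reality relation \eqref{conj}. For (c), your two-step scheme is exactly the classical one: first the divisibility
\[
\rho'\bigl(f(z),\overline{f(z)}\bigr)=a(z,\overline z)\,\rho(z,\overline z)
\]
in the ring of real-analytic germs at $q$, which holds because $\nabla\rho\neq 0$ at the regular point $q$ lets you straighten $\rho$ into a coordinate and apply Hadamard's lemma (and you are right that this is where $q\in\Gamma^*$ is essential --- at a singular point the ideal of $\Gamma$ need not be generated by $\rho$); second, complexification of this identity to $\rho'(f(z),\overline{f(w)})=a(z,\overline w)\rho(z,\overline w)$ on a neighbourhood of $(q,q)$, valid by uniqueness of the extension from the diagonal, since both sides are holomorphic in $z$ and antiholomorphic in $w$. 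The one point you gloss over is the biholomorphic case: applying the forward inclusion to $f^{-1}$ requires $f^{-1}(\Gamma'\cap U')\subset\Gamma\cap U$, which is not literally contained in the hypothesis $f(\Gamma\cap U)\subset\Gamma'\cap U'$. It follows after shrinking $U'$: near the regular points $q$ and $q'$ both $\Gamma\cap U$ and $\Gamma'\cap U'$ are $(2n-1)$-dimensional manifolds, and the injective continuous map $f|_{\Gamma\cap U}$ is open by invariance of domain, so its image contains $\Gamma'\cap U'$ for $U'$ small, whence every point of $\Gamma'\cap U'$ pulls back into $\Gamma\cap U$. This is a one-line repair, not a genuine gap.
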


Property (c) can be viewed as the biholomorphic invariance of Segre varieties. It has important consequences. For example, it 
allows us to view intrinsically the described above phenomenon of extension of the Levi foliation to a  holomorphic foliation 
in a neighbourhood of a regular point of~$\Gamma$. Indeed, the complex hyperplanes $\{ z_n = c \}$ in coordinates 
(\ref{NormForm}) are precisely Segre varieties of $\Gamma$ for every complex $c$.

Let $q \in \Gamma^*$. Denote by ${\mathcal L}_q$ the leaf of the Levi foliation through $q$.
Note that by definition it is a connected complex hypersurface closed in $\Gamma^*$.
As a simple consequence of Proposition \ref{SegreProp} we have 

\begin{cor}
\label{Segre+Levi}
Let  $a \in \Gamma^*$. Then the following holds:
\begin{itemize}
\item[(a)]  The leaf ${\mathcal L}_a$ is contained in the unique irreducible component $S_a$ of $Q_a$. In a small neighbourhood of $a$ this is also a unique 
complex hypersurface through $a$ which is contained in $\Gamma$.
\item[(b)] For every $a \in \Gamma^*$ the complex hypersurface $S_a$ is contained in $\Gamma$;
\item[(c)] For every $a, b \in \Gamma^*$ one has $b \in S_a \Longleftrightarrow S_a = S_b$.
\item[(d)] Suppose that $a \in \Gamma^*$ and ${\mathcal L}_a$ touches a point  $q \in \Gamma$ such that $\dim_\C Q_q = n-1$ (the point $q$  may be singular). Then  $Q_q$ contains $S_a$ as an irreducible component. 
\end{itemize}
\end{cor}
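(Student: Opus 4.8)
The plan is to prove the four assertions in order, the essential inputs being the normal form \eqref{NormForm} at a regular point, Proposition~\ref{SegreProp}, and the identity principle for irreducible analytic sets. For part (a), I would work in local holomorphic coordinates centred at $a$ in which $\Gamma=\{z_n+\overline z_n=0\}$. Then the complexified defining function is $\rho(z,\overline w)=z_n+\overline w_n$ up to a nonvanishing holomorphic factor, so $Q_a=\{z_n=-\overline a_n\}=\{z_n=a_n\}$ (using $a_n+\overline a_n=0$) is a smooth complex hyperplane. In particular $Q_a$ is irreducible near $a$ and equals $S_a$ there, and it visibly contains the leaf $\mathcal L_a\subset\{z_n=a_n\}$. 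For the uniqueness clause, let $V$ be any complex hypersurface through $a$ with $V\subset\Gamma$. The restriction $z_n|_V$ is then holomorphic with identically vanishing real part, hence constant and equal to $a_n$; thus $V\subset\{z_n=a_n\}=S_a$, and equality of germs at $a$ follows by comparing dimensions.

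For part (b), I note that $\mathcal L_a\subset S_a$ with both of complex dimension $n-1$, so $\mathcal L_a$ is a nonempty open subset of the regular locus of the irreducible variety $S_a$. The real analytic function $z\mapsto\rho(z,\overline z)$ vanishes on $\mathcal L_a$ by the definition of the Levi foliation. Since the regular locus of $S_a$ is a connected real analytic manifold and $\mathcal L_a$ is open in it, the identity principle forces $\rho(z,\overline z)\equiv 0$ on all of $S_a$. By Proposition~\ref{SegreProp}(a) this says precisely that $S_a\subset\Gamma$.

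Parts (c) and (d) are then formal consequences. For (c), the implication $S_a=S_b\Rightarrow b\in S_a$ is immediate since $b\in\mathcal L_b\subset S_b$. Conversely, if $b\in S_a$ then by (b) the set $S_a$ is a complex hypersurface in $\Gamma$ passing through the regular point $b$, so the uniqueness in (a) gives that $S_a$ and $S_b$ have the same germ at $b$; being irreducible and agreeing on a nonempty open set, they coincide in $U$. For (d), the statement that $\mathcal L_a$ touches $q$ means $q\in\overline{\mathcal L_a}\subset S_a$, as $S_a$ is closed. For every $z\in\mathcal L_a$ part (c) gives $S_z=S_a\ni q$, whence $q\in Q_z$, and Proposition~\ref{SegreProp}(b) yields $z\in Q_q$. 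Therefore $\mathcal L_a\subset Q_q$; since $S_a\cap Q_q$ is analytic in $S_a$ and contains the open set $\mathcal L_a$, irreducibility gives $S_a\subset Q_q$. As $\dim_\C S_a=n-1=\dim_\C Q_q$ and $S_a$ is irreducible, $S_a$ must be one of the irreducible components of $Q_q$.

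The main obstacle is part (b): propagating the vanishing of $\rho(z,\overline z)$ from the single leaf $\mathcal L_a$ to the whole component $S_a$. This hinges on two points that must be secured carefully — that $\mathcal L_a$ is genuinely open in $S_a$ (so the real analytic identity principle applies on the connected regular locus), and that $S_a$ is irreducible. Once (a) and (b) are in place, the invariance and symmetry properties of Segre varieties make (c) and (d) routine.
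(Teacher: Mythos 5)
Your proof is correct and follows essentially the same route as the paper's: the normal form \eqref{NormForm} together with the invariance of Segre varieties for (a), propagation of the vanishing of $\rho$ along the connected regular locus of the irreducible set $S_a$ for (b), and the uniqueness theorem for irreducible complex analytic sets for (c). The only minor deviation is in (d), where you invoke the symmetry of Proposition~\ref{SegreProp}(b) to get $\mathcal L_a\subset Q_q$ and then apply the identity principle on $S_a$, whereas the paper passes to the limit in $\rho(z,\overline{q^m})=0$ along a sequence $q^m\in\mathcal L_a$ converging to $q$ --- the two steps are equivalent in substance.
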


\begin{proof} (a)  In view of the invariance of the Levi form under biholomorphic maps, the Levi foliation is an intrinsic notion, i.e., it is
independent of the choice of (local) holomorphic coordinates. Similarly, in view of the biholomorphic invariance of Segre varieties described in 
Proposition~\ref{SegreProp}(c), these are also intrinsic objects. There exist a small neighbourhood $U'$ of $a$ and a holomorphic map  
which takes $a$ to the origin and is one-to-one between $U'$ and a neighbourhood $U$ of the origin, such that the image of $\Gamma$ has 
the form (\ref{NormForm}). Hence, without loss of generality we may assume  that $a = 0$ and view (\ref{NormForm}) as a representation of  
$\Gamma \cap U$ in the above local coordinates. Then $Q_0 \cap U = \{ z_n = 0 \}$. Hence, going back to the initial coordinates, we obtain 
by the invariance of Segre varieties that the intersection $Q_a \cap U'$ is a smooth complex hypersurface in $\Gamma \cap U'$ which coincides 
with ${\mathcal L}_a \cap U'$. In particular, it belongs to a unique irreducible component of $Q_a$. It follows also from (\ref{NormForm}) 
that it is a unique complex hypersurface through $a$ contained in a neighbourhood of $a$ in $\Gamma$.

(b) Recall that we consider $\Gamma$ with a defining function $\rho$ admitting expansion (\ref{exp}) in a polydisc of the form (\ref{polydisc}) 
centred at the origin. Since $S_a$ is contained in $\Gamma$ near $a$, it follows by analyticity of $\rho$ and uniqueness that 
$\rho \vert_{S_a} \equiv 0$, i.e., $S_a$ is contained in $\Gamma$.

(c) By part (b) the complex hypersurface $S_a$ is contained in $\Gamma$. Therefore, in a small neighbourhood of  $b$ we have $S_a = S_b$ 
by part (a). Then also globally $S_a = S_b$ by the uniqueness theorem for irreducible complex analytic sets.

(d) By assumption, the holomorphic function $z \mapsto \rho(z,\bar q)$ does not vanish identically. Consider a sequence of points 
$q^m \in {\mathcal L}_a$ converging to $q$. It follows by (c) that the complex hypersurface  $S_a = S_{q^m}$ is independent of $m$  
and by (a) $S_{q^m}$ is an irreducible component of $Q_{q^m}$. Passing to the limit we obtain that  $S_a$ is contained in $Q_q$ as an 
irreducible component.
\end{proof}

Let $\G$ be a real analytic Levi-flat hypersurface in $\C^n$. A singular point $q\in \G$ is called 
{\it dicritical} if $q$ belongs to  infinitely many geometrically different leaves ${\mathcal L}_a$. It follows by Corollary ~\ref{Segre+Levi}(d)  
that a point $q$ is dicritical if and only if  $\dim_\C Q_q = n$. Singular points which are not dicritical are called {\it nondicritical}. 

\begin{lemma}
Dicritical singular points form a complex analytic subset of $\Gamma$ of complex dimension at most $n-2$, in particular, it is a discrete set if 
$n=2$. If $\Gamma$ is algebraic, then the set of dicritical singularities is also complex algebraic.
\end{lemma}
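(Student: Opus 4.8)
The plan is to start from the characterization obtained just above: a point $q\in\Gamma$ is dicritical if and only if $\dim_\C Q_q=n$, that is, if and only if the holomorphic function $z\mapsto\rho(z,\overline q)$ vanishes identically. Writing out the complexification (\ref{compl}), this means $\sum_J c_{IJ}\overline q^{\,J}=0$ for every multi-index $I$; conjugating and using the reality relation (\ref{coef}), these are equivalent to the \emph{holomorphic} equations $\psi_I(q):=\sum_J c_{JI}q^J=0$. Thus the dicritical set $\Sigma$ is the common zero locus of the family $\{\psi_I\}$, and so it is a complex analytic subset of a neighbourhood of the origin (locally only finitely many $\psi_I$ are needed, by the Noetherian property). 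When $\rho$ is a real polynomial the $\psi_I$ are finitely many polynomials, so $\Sigma$ is complex algebraic; this settles the last assertion.

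Before bounding the dimension I would record two facts. First, for $q\in\Sigma$ the identity $\rho(z,\overline q)\equiv0$ forces every holomorphic derivative $\partial_z^\alpha\rho(q,\overline q)$ to vanish, and then (\ref{conj}) gives the antiholomorphic ones as well, so $\nabla\rho(q)=0$ and $q\in\Gamma^{sng}$. Second, by Proposition~\ref{SegreProp}(a),(b) the equality $Q_q=U$ yields $q\in Q_w$ for \emph{all} $w$; hence $\Sigma\subseteq\bigcap_{w}Q_w$, and $\rho(z,\overline w)$ vanishes whenever $z\in\Sigma$ or $w\in\Sigma$.

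The dimension bound is the crux. Suppose, for contradiction, that $\Sigma$ has an irreducible component $\Sigma_0$ with $\dim_\C\Sigma_0=n-1$, and let $h$ be a local reduced holomorphic equation of $\Sigma_0$. By the second fact above, the complexified $\rho(z,\overline w)$ vanishes on $\{h(z)=0\}\cup\{\overline{h(w)}=0\}$; since $h(z)$ and $\overline{h(w)}$ are coprime, their product divides $\rho$, giving $\rho(z,\overline w)=h(z)\,\overline{h(w)}\,g(z,\overline w)$ and, on the diagonal, $\rho(z,\overline z)=|h(z)|^2\,g(z,\overline z)$. This factorization is the main obstacle, and I would complete the argument by examining it along $\Sigma_0$: at a generic smooth point $p\in\Sigma_0$ one has $g(p,\overline p)\neq0$, so near $p$ the zero set $\Gamma=\{\rho=0\}$ coincides with the complex hypersurface $\Sigma_0$, which has real dimension $2n-2$. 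Then $\Gamma$ has no $(2n-1)$-dimensional regular points near $p$, so $p\notin\overline{\Gamma^*}$. Since every dicritical point is an accumulation point of leaves of $\mathcal L\subset\Gamma^*$ and therefore lies in $\overline{\Gamma^*}$, the truly dicritical points of $\Sigma_0$ form a proper analytic subset of dimension at most $n-2$, contradicting $\dim_\C\Sigma_0=n-1$. (Equivalently: for a minimal defining function $\rho$ the spurious factor $|h|^2$ cannot be present.) Hence $\dim_\C\Sigma\le n-2$, and for $n=2$ this forces $\Sigma$ to be discrete.

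I expect the delicate step to be exactly this last one — ruling out that an $(n-1)$-dimensional component of $\{\,\dim_\C Q_w=n\,\}$ genuinely accumulates infinitely many leaves. The factorization $\rho(z,\overline z)=|h(z)|^2 g(z,\overline z)$ reduces the question to a statement about the chosen defining function, and the care needed is in relating the Segre-theoretic condition $\dim_\C Q_q=n$ to dicriticality in the intrinsic sense of leaves, via minimality of $\rho$ and the fact that the relevant points lie in $\overline{\Gamma^*}$.
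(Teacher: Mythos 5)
Your first step is fine, and in fact more elementary than the paper's: rewriting the dicriticality condition $\rho(\cdot,\overline q)\equiv 0$ via \eqref{coef} as the holomorphic equations $\psi_I(q)=\sum_J c_{JI}q^J=0$ gives analyticity directly (and settles the algebraic case at once), whereas the paper complexifies $\Gamma$ to $\G^c=\{\rho(z,w)=0\}$, applies the Cartan--Remmert theorem to the set of points where the fibre germ of $\pi_w:\G^c\to U_w$ has dimension $\ge n$, and pushes down by Remmert's rank theorem. Your two preliminary facts are also correct. The problem is in the dimension bound, which you yourself flag as the crux.

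The step that fails is the assertion that $g(p,\overline p)\neq 0$ at a generic smooth point $p\in\Sigma_0$. What the factorization gives you (after extracting maximal powers) is that $g$ is not divisible by $h(z)$ or by $\overline{h(w)}$, i.e.\ $g\not\equiv 0$ on $\{h(z)=0\}$ and on $\{\overline{h(w)}=0\}$; but the vanishing of $g(p,\overline p)$ for \emph{all} $p\in\Sigma_0$ complexifies only to vanishing on $\Sigma_0\times\overline{\Sigma_0}$, a codimension-two set, so it implies no divisibility and the extraction of factors cannot be iterated. Concretely, take $\rho(z,\overline w)=h(z)\,\overline{h(w)}\,\bigl(h(z)+\overline{h(w)}\bigr)$, e.g.\ with $h(z)=z_n$: this $\rho$ is real and satisfies \eqref{conj}, every $q\in\{h=0\}$ has $\rho(\cdot,\overline q)\equiv 0$, yet $g(p,\overline p)=2\Re h(p)\equiv 0$ on $\Sigma_0=\{h=0\}$, and near a generic such $p$ the set $\Gamma=\{\Re h=0\}$ is a smooth Levi-flat hypersurface, so $p\in\overline{\Gamma^*}$ and your intended contradiction evaporates. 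The example exposes the second, related defect: the zero set of the $\psi_I$ depends on the choice of $\rho$, and the equivalence ``$q$ dicritical $\iff \dim_\C Q_q=n$'' on which your contradiction leans (every point of $\Sigma_0$ is truly dicritical, hence in $\overline{\Gamma^*}$) holds only for a minimal defining function --- in the example the points of $\{h=0\}$ satisfy $\dim_\C Q_q=n$ but each lies on a single leaf. So minimality is not the parenthetical aside you make it, but the missing load-bearing hypothesis. Once it is imposed, your own factorization finishes the proof immediately and more simply than your route: for irreducible $\Gamma$ and minimal $\rho$ the complexified germ $\rho(z,\overline w)$ is irreducible, and $\rho=h(z)\,\overline{h(w)}\,g(z,\overline w)$ exhibits it as a product of non-units, a contradiction, with no need to evaluate $g$ on the diagonal or to invoke $\overline{\Gamma^*}$. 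This irreducibility is exactly the input the paper uses in different packaging: there $\{(z,w)\in\G^c:\dim l_{(z,w)}\pi_w\ge n\}$ is a proper analytic subset of the irreducible $(2n-1)$-dimensional $\G^c$, hence of dimension at most $2n-2$, while its fibres over dicritical points are $n$-dimensional, forcing the dicritical set down to dimension $n-2$.
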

\begin{proof} Indeed, the complexification of the defining function $\rho$ of $\G$ given by~\eqref{compl} defines (after additional complex 
conjugation) a complex analytic set $\G^c = \{(z,w)\in U_z \times U_w : \rho(z,w)=0\}$, where  $U_z$ and $U_w$ are suitable
neighbourhoods of the origin. The fibres of the projection 
$\pi_w : \G^c \to U_w$ can be identified with the Segre varieties $Q_w$. Therefore, dicritical singularities of $\G$ correspond to points $w$
such that $\dim \pi_w^{-1}(w)= n$. Denote by $l_{(z,w)} \pi_w$ the germ of the fibre $\pi_w^{-1}(w)$ at a point $(z,w)\in \G^c$.
Since the map $(z,w) \to \dim l_{(z,w)} \pi_w$ is upper semicontinuous, the set of dicritical singularities is closed.
By the Cartan-Remmert theorem (see, e.g., \cite[V.3.3]{Lo1}), the set $\Sigma = \{ (z,w) : \dim l_{(z,w)} \pi_w \ge n\}$
is complex analytic. The set of dicritical singularities can now be identified with $\pi_w(\Sigma)$.
Since $\dim l_{(z,w)} \pi_w \le n$, by Remmert's rank theorem (\cite[V.6]{Lo1}), $\pi_w(\Sigma)$ is a complex
analytic subset of $U_w$, after possibly shrinking $U_z$ and $U_w$. Finally, since $\G^c$ is irreducible of dimension $2n-1$, 
the set $\pi_w(\Sigma)$ has dimension at most $n-2$, which proves the assertion.

If $\Gamma$ is algebraic, then $\G^c\subset \cx P^n \times \cx P^n$, and $\Sigma\subset \G^c$ are also algebraic.
It follows that $\pi_w(\Sigma)$ is algebraic as well.
\end{proof}

\section{Singular webs}\label{s.webs}
In this section we define singular holomorphic webs and outline the connection between webs and
differential equations. This connection is transparent in dimension two, so we will discuss this case
separately. For a comprehensive treatment of singular webs see, e.g.,~\cite{Pi}.

\subsection{Webs in $\mathbb C^2$}\label{s.webinc2}
Recall that the germ of a holomorphic codimension one foliation $\mathcal F$ in $\mathbb C^n$, $n\ge 2$, can 
be given by the germ of a holomorphic $1$-form $\omega\in \Lambda^1(U)$ satisfying the Frobenius integrability 
condition $\omega \wedge d\omega =0$. The leaves of $\mathcal F$ are then complex hypersurfaces $L$ that 
are tangent to $\ker \omega$. The foliation $\mathcal F$ is {\it singular} if the set 
$\mathcal F^{\rm sng} =\{z : \omega(z) =0\}$ is nonempty and of codimension at least 2. 

In dimension $2$ the integrability condition for $\omega$ always holds, and  the above definition of a 
(nonsingular) foliation can be interpreted in the following way: for a suitably chosen open set $U$ and
coordinate system in $\cx^2$ the foliation $\mathcal F$ is given by a {\it holomorphic first order ODE} 
\begin{equation}\label{e.fol}
\frac{dz_2}{dz_1}=F(z_1, z_2)
\end{equation}
with respect to unknown function $z_2=z_2(z_1)$. The leaves of the foliation $\mathcal F$ are then the 
graphs of solutions of the ODE. This interpretation admits a far reaching generalization which we now 
describe. Our considerations are local and should be understood on the level of germs,
but to simplify the discussion we will work with appropriate representatives of the germs.

Let $U_1,U_2$ be  domains in $\C$ containing the origin. Set  $U = U_1 \times U_2 \subset \cx^2$,  and 
consider a holomorphic function $\Phi$  on $U \times \C$. It defines a {\it holomorphic ordinary differential 
equation} on $U \times \C$, 
\begin{eqnarray}\label{ME1}
\Phi(z_1,z_2,p) = 0
\end{eqnarray}
with $z = (z_1,z_2) \in U$ and $p= \frac{dz_2}{dz_1} \in \C$. This is an equation for the unknown function 
$z_2 = z_2(z_1)$; in other words, we view $z_1$ and $z_2$ as the independent and the dependent variables respectively. 
 For $d \in \mathbb N$, a {\it singular holomorphic $d$-web} $\mathcal W$ in $U$ is defined by equation (\ref{ME1})  
 where $\Phi$ is of the form
\begin{eqnarray}\label{ME2}
\Phi(z,p) = \sum_{j=0}^d \Phi_j(z) p^{j}.
\end{eqnarray}
In general, there are $d$ families of solutions of~\eqref{ME1} (with $\Phi(z,p)$ as in~\eqref{ME2}), which are either unrelated to each other or may fit together along some complex curves (branching). The graphs of solutions are called the {\it leaves} of 
$\mathcal W$.  

\begin{ex}\label{ex.brunella}
Consider the ODE of the form $p^2= 4z_2$ in $\mathbb C^2$. Its solutions form a complex one-dimensional family 
of curves $L_c =\{z_2 = (z_1 + c)^2\}$, $c\in \cx$. For every point $b = (b_1, b_2)\in \cx^2$ with $b_2\ne 0$,
there exist exactly two curves passing through this point, namely, $L_{-b_1-\sqrt b_2}$ and $L_{-b_1+\sqrt b_2}$
(we can take an arbitrary branch of $\sqrt z$). These curves meet at $b$ transversely. But any point $(b_1, 0)$ is 
contained only in one curve $L_{-b_1}$ of the family. $\diamond$
\end{ex}

If $d=1$, then (\ref{ME1}) becomes resolved with respect to the derivative, so $1$-webs simply coincide with 
holomorphic foliations (possibly singular). If  \eqref{ME2} factors into distinct, linear in $p$ terms, i.e., $\Phi(z,p) = \Pi_{j=1}^d (p - f_j(z))$, 
where $f_j(z)$ are holomorphic functions, then each ODE $p = f_j(z)$ defines a holomorphic foliation $\mathcal F_j$.  
If the leaves of $\mathcal F_j$ intersect in general position (resp. pairwise transversely) then the union of $\mathcal F_j$ 
is called a smooth (resp. quasi-smooth) holomorphic $d$-web. Thus, our definition of a singular $d$-web is a proper generalization of smooth webs. From this point of view one can consider singular $d$-webs as a ``branched" version of 
their smooth counterparts.

\subsection{Webs in $\mathbb C^n$, $n\ge 2$}
The definition of a $d$-web (singular or smooth) via differential equations does not have a simple generalization to 
higher dimensions. There are several equivalent definitions in the literature. We will use a more geometric one that is
more suitable for our purposes. 

We denote by $\PP T^*_n:=\PP T^*\C^n$ the projectivization of the cotangent bundle of $\C^n$ with the 
natural projection $\pi: \PP T^*_n \to \C^n$. A local trivialization of $\PP T^*_n$ is isomorphic to 
$U \times G(1,n)$, where $U\subset \cx^n$ is an open set and $G(1,n) \cong \mathbb CP^{n-1}$ is the Grassmanian 
space of linear complex one dimensional subspaces in~$\cx^{n}$. The space $\PP T^*_n$ has the canonical
structure of a {\it contact manifold}, which can be described (using coordinates) as follows. Let $z = (z_1,...,z_n)$ be the
coordinates in $\C^n$ and $(\tilde p_1,..., \tilde p_n)$ be the fibre coordinates corresponding to the basis of differentials 
$dz_1, \dots, dz_n$.   We may view $[\tilde p_1, \dots, \tilde p_n]$ as homogeneous coordinates on $G(1, n)$.
Then in the affine chart $\{\tilde p_n \ne 0\}$, in nonhomogeneous coordinates $p_j = \tilde p_j / \tilde p_{n}$, 
$j=1,\dots, n-1$, the 1-form
\begin{equation}\label{e.cont}
\eta = d z_n + \sum_{j=1}^{n-1} p_j dz_j
\end{equation}
is a local contact form. Considering all affine charts $\{\tilde p_j \ne 0\}$ we obtain a global contact structure.

Let $U$ be a domain in $\C^n$. Consider a complex purely n-dimensional analytic subset $W$  in 
$\pi^{-1}(U) \subset \PP T^*_n$. Suppose that the following conditions hold:

\begin{itemize}
\item[(a)] the image under $\pi$ of every irreducible component of $W$ has dimension $n$;
\item[(b)] a generic fibre of $\pi$ intersects $W$ in $d$ regular (smooth) points and at every such point $q$ the differential 
$d \pi(q): T_qW \to \C^n$ is surjective;
\item[(c)] the restriction of the contact form $\eta$ on the regular part of $W$ is Frobenius integrable. So $\eta\vert_W = 0$ 
defines the foliation ${\mathcal F}_W$ of the regular part of $W$. (The leaves of the foliation ${\mathcal F}_W$ are called 
Legendrian submanifolds.)
\end{itemize}
Under these assumptions we define a {\it singular $d$-web} ${\mathcal W}$ in $U$ as a triple $(W,\pi,{\mathcal F}_W)$. 
A leaf of the web ${\mathcal W}$ is a component of the projection of a leaf of ${\mathcal F}_W$ into $U$. Note that at a 
generic point $z\in U$ a $d$-web $(W,\pi,{\mathcal F}_W)$ defines in $U$ near $z$ exactly $d$ families of smooth 
foliations. 

\subsection{Connection between webs and PDEs}
In this subsection we establish the connection between singular $d$-webs and PDEs in higher dimensions. This will be 
important in the proof of our main results. We need first to interpret a first order PDE as a subvariety of a 1-jet bundle. 
Recall that two smooth functions $\phi_1$ and $\phi_2$ have the same $k$-jet at a source point $x^0 \in \cx^n$ if 
$\vert \phi_1(x) - \phi_2(x) \vert = o(\vert x - x^0 \vert^k)$. In other words, this simply means that their Taylor 
expansions of order $k$ at $x^0$ coincide. The equivalence classes with respect to this relation are called $k$-jets 
at~$x^0$. 

Let $U\subset \mathbb C^{n-1}$ be a domain. Consider $J^1(U,\C)$, the space of $1$-jets of holomorphic functions
$f: U \to \mathbb C$. We can view such functions as sections of the trivial line bundle 
$U\times \cx \to U$. Then $J^1(U,\C)$ can be viewed as a  vector bundle 
\begin{equation}\label{e.Jproj}
\pi : J^1(U,\C) \to U \times \cx
\end{equation}
of rank $(n-1)$. Let $z'=(z_1,\dots,z_{n-1})$ be the coordinates on $U \subset \cx^{n-1}$, $z_n$ be 
the coordinate in the target space, and let $p_j$ denote the partial derivatives of $z_n$ with
respect to $z_j$. Then $(z,p)=(z_1, \dots, z_n, p_1, \dots, p_{n-1})$ form the coordinate system on $J^1(U,\C)$.
Note that $\dim J^1(U,\C)=2n-1$. The space $J^1(U,\C)$ admits the structure of a contact manifold with the 
contact form 
\begin{equation}\label{e.cont2}
\theta = d z_n - \sum_{j=1}^{n-1} p_j dz_j .
\end{equation}
Given a local section $f: U \to \cx$, let $j^1f : U \to J^1(U,\C)$, $j^1f: z \mapsto j^1_zf$ denote the 
corresponding section of the 1-jet bundle. Then a section $F: U \to J^1(U,\C)$ locally coincides with
$j_1f$ for some section $f: U \to \cx$ if and only if $F$ annihilates $\theta$. Now observe that 
the map $\iota : (z,p) \mapsto (z,-p)$ in the chosen coordinate systems is a biholomorphism whose pullback 
sends $\eta$ to $\theta$ in~\eqref{e.cont}, i.e., $\iota: J^1(U,\C) \to \PP T^*_n$ is a contactomorphism. 
Using the map $\iota$ we may view the projectivized  cotangent bundle $\PP T^*_n$ as a compactification 
of the 1-jet bundle. Alternatively, we may compactify $J^1(U,\C)$ in the variables $p$, that is, we compactify
every fibre $\cx^{n-1}_p$ to $\cx P^{n-1}$. Since the dependence of the form $\theta$ is linear in $p$, the 
compactified bundle will be a contact complex manifold.

Any first order holomorphic PDE of the form 
\begin{equation}\label{e.pde}
\Phi\left(z_1, \dots, z_{n-1}, z_n, \frac{\partial z_n}{\partial z_1}, \dots, \frac{\partial z_n}{\partial z_{n-1}} 
\right) =0
\end{equation}
with respect to the unknown function $z_n= z_n(z_1,\dots, z_{n-1})$ defines a complex hypersurface $W_\Phi$ 
in $J^1(U,\C)$ given by the equation $\Phi(z, p) =0$. Any solution $z_n = f(z_1, \dots, z_{n-1})$ of~\eqref{e.pde} 
admits {\it prolongation} to $J^1(U,\C)$, i.e., defines there an $(n-1)$-dimensional submanifold $S_f$ given by 
$$
\left\{z_n = f(z_1, \dots, z_{n-1}),\ \ p_j=\frac{\partial f}{\partial z_j}(z_1,\dots, z_{n-1}),\ \ j=1,\dots, n-1 \right\} .
$$
Hence, solutions of this differential equation can be identified with holomorphic sections $S_f$ of $W_\Phi$ annihilated  
by the contact form $\theta$. As an example, for equation~\eqref{e.fol}, the corresponding hypersurface 
$W \subset J^1(U,\cx)$, $U\subset \cx$, is simply the graph of a holomorphic function $p=F(z)$. It is foliated by graphs 
of solutions, which are integral curves of the distribution defined by $\theta$, and the corresponding foliation $\mathcal F$ 
in $\cx^2$ is obtained by the biholomorphic projection $\pi|_W: W \to \mathbb C^2_z$.

Suppose now that we have several differential equations of the form~\eqref{e.pde} such that the intersection of 
the corresponding hypersurfaces $W_\Phi$ is a complex analytic subset $W$ of $J^1(U,\cx)$ of pure dimension $n$. 
For example, we can have $n-1$ equations in general position. Suppose further that the compactification of $W$ 
in the projectivized cotangent bundle $\PP T^*_n U$ still forms a complex subvariety of the same dimension. This 
is the case, for example, if all $\Phi(z,p)$ are polynomial with respect to $p$ with coefficients holomorphic in $z$. 
Then $W$ satisfies the definition of a singular web given in the previous subsection. Note that we need to consider 
compactification of $W$ only if the projection in~\eqref{e.Jproj} has fibres of positive dimension, since otherwise 
the projection from $J^1(U,\cx)$ gives the same web in $U\subset \cx^n$.

Also note that for $n=2$ both definitions of a singular web agree. Indeed, given a differential equation~\eqref{ME1}, 
\eqref{ME2}, the function $\Phi(z,p)$ is polynomial in $p$ and thus it can be projectivized
to define a hypersurface in $\PP T^*_2 U$. This gives the hypersurface in $\PP T^*_2 U$ that has the required 
properties. Conversely, let $U$ be a neighbourhood of the origin in $\cx^2$, let $W$ be a complex hypersurface in 
$\PP T^*_2 U$ with the surjective projection  $\pi : W \to U$. Without loss of generality assume that $W$ is irreducible. 
If $\pi^{-1}$ is discrete, then by the Weierstrass preparation theorem, in a sufficiently small neighbourhood $\tilde U$
of the origin $W$ can be represented by a Weierstrass pseudo-polynomial in $p$, and we obtain the definition of the
web given in Section~\ref{s.webinc2}. Suppose that $\dim \pi^{-1}(0)=1$. Let 
$\tau : \cx^2_{(p_0,p_1)}\setminus \{0\} \to \mathbb CP^1$ be the natural projection given by 
$\tau(p_0, p_1) =[p_0,p_1]$. Let 
$$
\tilde \tau = ({\rm Id}, \tau): U \times (\cx^2\setminus \{0\}) \to U \times \mathbb CP^1 .
$$
Then the set $\tilde W = \tilde\tau^{-1}(W)$ is complex analytic in $U \times (\cx^2\setminus \{0\})$ of
dimension 3. The set $U \times\{0\}$ is removable, and so we may assume that $\tilde W$ is complex analytic
in $U\times \cx^2$. In a neighbourhood of $(0,0)$ it can be given by an equation $\phi(z,p)=0$. But since its
image is complex analytic in $U\times \cx P^1$, the function $\phi$ is in fact a homogenous polynomials in $p$.
This shows that in a neighbourhood of the origin in $U$, the hypersurface $W$ can be given by an equation which is 
polynomial in variable $p$, and we again recover the definition of Section~\ref{s.webinc2}.

\subsection{Meromorphic first integral}
We also need a related notion of a {\it multi-valued meromorphic first integral}.
Let $X$ and $Y$ be two complex manifolds and $\pi_X: X \times Y \to X$ and $\pi_Y :X \times Y \to Y$ be the natural 
projections. A $d$-valued {\it meromorphic correspondence} between $X$ and $Y$ is a complex analytic subset 
$Z \subset X \times Y$ such that the restriction $\pi_X\vert Z$ is a proper surjective generically $d$-to-one map. 
Hence, $\pi_Y \circ \pi_X^{-1}$ is defined generically on $X$ (i.e., outside a proper complex analytic subset in $X$), 
and can be viewed as a $d$-valued map. In what follows we denote a meromorphic correspondence by a triple $(Z;X,Y)$ 
equipped with the canonical projections:

\begin{center}
\begin{tikzpicture}[description/.style={fill=white,inner sep=2pt}]
    \matrix (m) [matrix of math nodes, row sep=3em,
    column sep=2em, text height=1ex, text depth=0.25ex]
    {  & Z & \\
      X & & Y \\
    };
       \path[->] (m-1-2) edge  node[auto,swap] {$ \pi_X $}  (m-2-1);
       \path[->] (m-1-2) edge node[auto] {$ \pi_Y $} (m-2-3);
\end{tikzpicture} 
\end{center}

 A {\it multiple-valued meromorphic first integral} of a singular $d$-web $\mathcal W$ in $U$ is a $d$-valued meromorphic correspondence 
$(Z;U,\mathbb CP)$ such that level sets $\pi_X \circ \pi_Y^{-1}(c)$, $c \in \mathbb CP$ are invariant subsets of $\mathcal W$, i.e., they 
consist of the leaves of $\mathcal W$.

\begin{defn}
Let $\Gamma$ be a real analytic Levi-flat hypersurface in a domain $\Omega \subset \C^n$.
We say that a holomorphic $d$-web $\mathcal W$ in $\Omega$ is the {\it extension} of the Levi foliation of $\Gamma^*$
if every leaf of the Levi foliation  is a leaf of $\mathcal W$.
\end{defn}

Although in this definition we do not require $\mathcal W$ to be irreducible, we suppose that at least one leaf of every 
component of $W$ meets $\Gamma^*$. Clearly, under this condition the singular web extending the Levi foliation is unique.

\section{Proof of Theorem~\ref{t.1}, Case $(a)$}

The basic idea of our approach is the following. The leaves of the Levi foliation can be identified with the components
of the Segre varieties associated with $\G$. It is possible to find a complex line parametrizing 
all Segre varieties of $\Gamma$. While for a general real analytic hypersurface $\G$ in $\cx^n$, the corresponding family 
of Segre varieties is $n$-dimensional, Levi-flat hypersurfaces can be characterized as those whose Segre family is 
one-dimensional, and ultimately this is the reason why the Levi foliation admits extension to the ambient space. 
Essentially, a suitably chosen one-dimensional family of Segre varieties is the meromorphic (perhaps, multiple-valued) 
first integral. Its graph can be described by a system of $n-1$ first order PDEs. This system defines an $n$-dimensional 
complex analytic subvariety of the 1-jet bundle of holomorphic functions on $\cx^{n-1}$. This subvariety can then be 
compactified in the projectivized cotangent bundle of $\cx^n$, which gives the singular $d$-web.

We assume that we are in the hypothesis of part (a) of Theorem~\ref{t.1}. The proof consist of several steps.

\subsection{Existence of a parametrizing complex line} We begin with the following

\begin{prop}\label{l.A'}
Under the assumptions of Theorem~\ref{t.1}, for a sufficiently small neighbourhood $\Omega$ of the origin
there exists a complex line $A\subset \mathbb C^n$ with the following properties:
\begin{enumerate}
\item[(i)] $A \cap Q_0  = \{0\}$;
\item[(ii)] $A \not\subset \G^{sng}$;
\item[(iii)] For every $q\in \G^*\cap \Omega$, there exists a point $w\in A$ 
such that $\mathcal L_q \subset Q_w$.
\end{enumerate}
\end{prop}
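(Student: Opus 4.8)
The plan is to secure (i) and (ii) by choosing the direction of $A$ generically, and then to derive (iii) from (i) using the Segre calculus of Corollary~\ref{Segre+Levi}. Since the origin is nondicritical, $\dim_\C Q_0=n-1$ (a point being dicritical exactly when its Segre variety has dimension $n$), so $Q_0$ is a genuine complex hypersurface and its tangent cone $C_0(Q_0)$ at the origin has dimension $n-1$, whose projectivization is a proper subset of $\PP^{n-1}$. Likewise, since $\dim_\R\Gamma^{sng}\le 2n-2<2n$, the directions of complex lines through the origin lying in $\Gamma^{sng}$ form a proper subset of $\PP^{n-1}$ (otherwise $\Gamma^{sng}$ would fill a neighbourhood of the origin). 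I would therefore pick $A=\C\cdot v$ with $[v]$ avoiding both proper sets; in a small enough polydisc $\Omega$ the line $A$ then meets $Q_0$ only at the origin, giving (i), and is not contained in $\Gamma^{sng}$, giving (ii).

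For (iii) I would first record the reduction: \emph{if $w\in S_q$ then $\mathcal L_q\subset Q_w$}. Indeed, for any $z\in\mathcal L_q\subset\Gamma^*$ Corollary~\ref{Segre+Levi}(c) gives $S_z=S_q$, so $w\in S_q=S_z\subset Q_z$ by Corollary~\ref{Segre+Levi}(a); the symmetry $w\in Q_z\Leftrightarrow z\in Q_w$ of Proposition~\ref{SegreProp}(b) then yields $z\in Q_w$, and letting $z$ run over $\mathcal L_q$ gives $\mathcal L_q\subset Q_w$. Hence it suffices to show that, after shrinking $\Omega$, the line $A$ meets $S_q$ for every $q\in\Gamma^*\cap\Omega$. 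To this end I would use that the Segre variety depends continuously on its centre: as $q\to0$ one has $\rho(\cdot,\bar q)\to\rho(\cdot,\bar0)$, hence $Q_q\to Q_0$, and the component $S_q$ through $q$ converges to some component $D$ of $Q_0$ through the origin. As $A$ meets $D$ properly at the origin with finite multiplicity $\mu_D\ge1$, the stability of proper intersections of analytic sets — concretely, a Hurwitz/Rouch\'e argument for $s\mapsto g(sv)$, where $g$ is a local defining factor of $S_q$ converging to one of $D$ — produces $\mu_D$ intersection points of $A\cap S_q$ clustering at the origin. In particular $A\cap S_q\neq\emptyset$ inside $\Omega$, and any such point $w$ satisfies $\mathcal L_q\subset Q_w$ by the reduction, proving (iii).

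The delicate point is exactly this last persistence statement, required \emph{uniformly} over all $q\in\Gamma^*\cap\Omega$: one must track not the full fibre $Q_q$ but the single leaf-component $S_q$ through $q$, and verify that it stays near a fixed component of $Q_0$ as $q$ ranges over the shrinking neighbourhood. Conceptually this is where Levi-flatness enters — the Segre family is one-parameter, its level sets are the $(n-1)$-dimensional Segre varieties, and the transversality in (i) makes $A$ cut every nearby leaf — but converting the continuity of components into a uniform nonvanishing of the intersection (via parametrized Weierstrass preparation, or continuity of proper intersection multiplicities) is the technical crux of the argument.
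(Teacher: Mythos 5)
Your proposal is correct, and it reproduces the paper's overall skeleton — tangent cone of $Q_0$ for (i), a genericity/dimension count for (ii) (the paper chooses $A$ so that $A\not\subset\G$ outright, which is marginally stronger than (ii) but proved the same way), and for (iii) the reduction to showing that $A$ meets the component $S_q$ — while replacing the paper's key lemma for (iii) with a genuinely different and slicker mechanism. The paper must handle the possibility that the intersection point $w\in S_q\cap A$ is a singular point of $\G$, and it does so via Lemma~\ref{reg}: a connectedness argument propagating $S_q\subset Q_a$ along paths in the regular part of $S_q$, using the complexified identity $\rho(z',h(z'),\overline{w'},\overline{h(w')})\equiv 0$, followed by a limiting argument over regular points $w^m\to w$. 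Your reduction bypasses all of this: for each $z\in\mathcal L_q$ you get $S_z=S_q$ from Corollary~\ref{Segre+Levi}(c), hence $w\in S_z\subset Q_z$, and the symmetry of Proposition~\ref{SegreProp}(b) — which holds for arbitrary points and needs no regularity of $w$ — gives $z\in Q_w$. The tradeoff is that you obtain only the leaf containment $\mathcal L_q\subset Q_w$, whereas the paper's lemma yields the stronger component containment $S_q\subset Q_w$; since the proposition, and its later use in deriving the system of PDEs, only require the leaf statement, nothing is lost, and your route is shorter.

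On the persistence step you correctly flag as the crux — that $A$ meets the particular component $S_q$, uniformly over $q\in\G^*\cap\Omega$ — the paper is in fact no more detailed than you: it asserts that ``$S_q$ is a small perturbation of a component of $Q_0$'' and invokes positivity of intersection indices. Your formulation (``the component $S_q$ converges to some component $D$ of $Q_0$'') is shaky as literally stated, since irreducible factors can merge and split as $q$ varies; but the repair you name is the right one and requires no convergence of components at all. Choosing coordinates with $A=\{z'=0\}$, condition (i) says $\rho(0',z_n,0)\not\equiv 0$, so parametrized Weierstrass preparation gives $\rho(z',z_n,\bar w)=u(z,\bar w)\,P(z_n;z',\bar w)$ with $P$ monic in $z_n$, uniformly for $w$ near $0$. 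Hence every irreducible component of every $Q_w\cap U$ is the zero set of a monic factor, projects properly and surjectively onto the transverse polydisc, and therefore meets $A$; applied to a component of $S_q\cap U$ through $q$ this yields $A\cap S_q\neq\emptyset$ with no case analysis, closing the one gap in both your argument and, arguably, the paper's.
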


The existence of such $A$ should be compared to the transversal of the Levi foliation in the smooth case: if $\G$ is given 
by $\{{\rm Re\,} z_n =0\}$, then the complex line $\{z_1 = \dots = z_{n-1}=0\}$ intersects all Segre varieties, and 
can be used as a local parametrization both of the Levi foliation and its extension.

\begin{proof}
(i) By the assumptions of the theorem, $Q_0$ is a (possibly reducible) complex analytic hypersurface in a neighbourhood of the origin. 
Its tangent cone at the origin also has the complex codimension 1 (see \cite{Ch}). Hence, every complex line $A$ that is not contained 
in  the tangent cone to $Q_0$ at the origin satisfies~(i). 

(ii) By analyticity, a real line in $\C^n$ through the origin either is contained in $\Gamma$ or intersects it in a finite number of points. 
Therefore, since $\Gamma$ has empty interior in $\C^n$,  real lines which are not contained in $\Gamma$ form an open dense subset 
in the corresponding grassmanian. It suffices now to consider such a line $l$ and to take as $A$ the complex line containing $l$. In particular,  
$A \not\subset \G^{sng}$. Hence, (ii) also holds.

(iii) Recall that by Corollary \ref{Segre+Levi} we have ${\mathcal L}_q \subset S_q$ where $S_q$ is the unique irreducible component 
of $Q_q$ containing the point $q$; furthermore,  $S_q\subset \G$. Note also that  $S_q\setminus \G^{sng}$ is a finite union of leaves 
of the Levi foliation, one of which is $\mathcal L_q$. The finiteness of this decomposition is due to Corollary~\ref{Segre+Levi}(d) and the 
assumption that there are no dicritical singularities in $\Gamma\cap \Omega$. Since $Q_0 \cap A = \{0\}$, every complex hypersurface 
which is a small perturbation of $Q_0$, has a nonempty intersection with $A$ in view of the positivity of intersection indices for complex 
analytic sets (see, for instance, \cite{Ch}).  In particular, after shrinking $\Omega$ if necessary, we may assume that every $Q_w$ is 
defined by the global equation (\ref{Segre1}). Then $S_q$ is a small perturbation of a component of $Q_0$ and therefore has a nonempty intersection 
with $A$. 

Consider a point  $w\in S_q \cap A$. If $w \in \Gamma^*$ then $S_q = S_w \subset Q_w$ by Corollary~\ref{Segre+Levi}(c). To treat
the case $w\notin \Gamma^*$ we will need the following. 

\begin{lemma}\label{reg}
For every regular point $a \in S_q$ (i.e., such that $S_q$ is a smooth complex hypersurface near $a$) one has $S_q \subset Q_a$.
\end{lemma}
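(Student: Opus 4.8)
The plan is to work directly with the complexified defining function and the local holomorphic structure of $S_q$ at the smooth point $a$, rather than with the Levi foliation of $\G$ near $a$. This is the crux: one is tempted to argue that $a\in S_q\subset\G$ and that near $a$ the unique complex hypersurface contained in $\G$ is a component of the Segre variety $Q_a$, which would give the claim immediately; but that reasoning is exactly Corollary~\ref{Segre+Levi}(a), and it requires $a\in\G^*$. The point of the lemma is precisely to cover the case $a\in\G^{sng}$, where the normal form~(\ref{NormForm}) is unavailable. The hypothesis I will actually use is only that $a$ is a \emph{regular point of $S_q$}, so that $S_q$ is a smooth complex hypersurface near $a$ and admits a local holomorphic parametrization there.

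Concretely, I would first recall that $S_q\subset\G$ by Corollary~\ref{Segre+Levi}(b), so the real defining function vanishes along $S_q$, i.e.\ $\rho(z,\bar z)=0$ for every $z\in S_q$. Since $a$ is a smooth point of $S_q$, I fix a biholomorphic parametrization $\phi:V\to S_q$ of a neighbourhood of $a$ in $S_q$, where $V\subset\C^{n-1}$ is a connected neighbourhood of $0$ and $\phi(0)=a$. Using the complexification $\rho(z,\bar w)$ from~(\ref{compl}), which is holomorphic in $z$ and antiholomorphic in $w$, I introduce $G(s,\bar t):=\rho(\phi(s),\overline{\phi(t)})$ on $V\times V$; since $\phi$ is holomorphic, $G$ is holomorphic in $s$ and antiholomorphic in $t$. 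The containment $S_q\subset\G$ gives the diagonal vanishing $G(s,\bar s)=\rho(\phi(s),\overline{\phi(s)})=0$ for all $s\in V$. Writing $G(s,\bar t)=\sum_{K,L}g_{KL}\,s^K\bar t^L$, the identity $\sum_{K,L}g_{KL}\,s^K\bar s^L\equiv 0$ together with the linear independence of the monomials $s^K\bar s^L$ (equivalently, complexifying $\bar s$ to an independent variable) forces $g_{KL}=0$ for all $K,L$, hence $G\equiv 0$ on $V\times V$. Specializing $t=0$ yields $\rho(\phi(s),\bar a)=0$ for every $s\in V$, i.e.\ $S_q\subset Q_a$ in a neighbourhood of $a$.

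To finish, I would globalize: the function $z\mapsto\rho(z,\bar a)$ is holomorphic on $\Omega$ and vanishes on $\phi(V)$, a nonempty relatively open subset of the irreducible analytic set $S_q$, so by the identity principle for irreducible analytic varieties it vanishes on all of $S_q$, giving $S_q\subset Q_a$ as claimed. I do not expect a genuine obstacle here; the only points requiring care are verifying that $G$ is holomorphic (resp.\ antiholomorphic) in $s$ (resp.\ $t$) and that diagonal vanishing propagates off the diagonal via polarization. The conceptual content is simply that the regularity hypothesis on $S_q$ supplies the holomorphic chart $\phi$, which lets the polarization argument replace the Levi normal form that is not available at a singular point of $\G$.
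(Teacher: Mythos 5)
Your proof is correct, and its engine is exactly the engine of the paper's proof: polarization (separate complexification) of the identity $\rho\vert_{S_q}\equiv 0$ in a local holomorphic chart of $S_q$ at a regular point, followed by the identity principle on the irreducible hypersurface $S_q$. You also correctly diagnose the pitfall the lemma is designed to avoid --- the paper itself remarks that $a$ may lie in $\G^{sng}$, so Corollary~\ref{Segre+Levi}(a) and the normal form \eqref{NormForm} are unavailable. The structural difference is that the paper does not apply the polarization directly at $a$: it runs an open-closed argument along a path $\gamma$ in the connected regular locus of $S_q$ from $q$ to $a$, taking $I=\{t_0\in[0,1]: S_q\subset Q_{\gamma(t)} \mbox{ for all } t \mbox{ near } t_0\}$, with $0\in I$ by Corollary~\ref{Segre+Levi}(c), and establishing closedness by passing to the limit in $\rho(z,\overline{\gamma(t^m)})=0$; your polarization step then appears there, with $S_q$ written as a graph $z_n=h(z')$ via the implicit function theorem rather than through a parametrization $\phi$ (an immaterial difference), to show $S_q\subset Q_b$ for all $b\in S_q$ near the limit point. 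On inspection, the path scaffolding is dispensable: the polarization step in the paper's closedness argument uses only $S_q\subset\G$ (Corollary~\ref{Segre+Levi}(b)) and smoothness of $S_q$ at the point in question, and never invokes the limit containment $S_q\subset Q_{\gamma(\hat t)}$, so it already yields the lemma at any regular point in one stroke --- which is precisely your argument. Your version buys brevity and makes the logical dependencies transparent (only Corollary~\ref{Segre+Levi}(b) is needed, not (c)); the paper's connectedness formulation would be forced only if the local step required knowing $S_q\subset Q_b$ for nearby $b$ in advance, which it does not. One cosmetic remark: your conclusion $\rho(\phi(s),\bar a)=0$ gives the containment $S_q\subset Q_a$ even in the degenerate event that $\rho(\cdot,\bar a)\equiv 0$, so no nondegeneracy of $Q_a$ needs to be checked; the paper's proof is in the same situation.
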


Note that the point $a$ may belong to $\Gamma^{sng}$.

\begin{proof} 
Since the complex hypersurface $S_q$ is irreducible,   the set of regular points of $S_q$ is an open connected (hence, connected by arcs) 
complex manifold (see \cite{Ch}). Consider a continuous path $\gamma: [0,1] \longrightarrow S_q$ with $\gamma(0) = q$, $\gamma(1) = a$. 
Introduce the set $I$ of $t_0 \in [0,1]$ such that $S_q \subset Q_{\gamma(t)}$ for every $t$ in a neighbourhood of $t_0$. By this definition 
$I$ is an open subset of $[0,1]$. It is nonempty because $0 \in I$ by Corollary~\ref{Segre+Levi}(c). We show that $I$ is also closed.  Consider 
a sequence of points $t^m \in I$ converging to $\hat t$. Then $S_q \subset Q_{\gamma(t^m)} = \{ z: \rho(z,\gamma(t^m)) = 0 \}$ for each $m$.
Passing to the limit we obtain that $S_q$ is contained in $Q_{\gamma(\hat t)}$. Note that the point $\hat w:= \gamma(\hat t)$ is regular for $S_q$, but in general can be singular for $\Gamma$. 

Let $z  = (z',z_n)$ with $z' = (z_1,...,z_{n-1})$. Applying the implicit function theorem, without loss of generality we may assume that $S_q$ 
is defined near $\hat w=(\hat w', \hat w_n)$ as the graph $z_n = h(z')$ of a function $h$ holomorphic in a neighbourhood of $\hat w'$ with 
$h(\hat w') = \hat w_n$. Since $S_q$ is contained in $\Gamma$, we have $\rho(z',h(z'),\overline{z'},\overline{h(z')}) \equiv  0$. Therefore, 
the function $\rho(z',h(z'),\overline{w'},\overline{h(w')})$ vanishes identically in $(z',w')$. This implies that for every point $b = (b',h(b'))$ in 
a neighbourhood of $\hat w$ in $S_q$,  the Segre variety $Q_b$ contains an open piece of $S_q$. We conclude by uniqueness that $S_q$ is 
contained in $Q_b$ for each $b \in S_q$ in a neighbourhood of $\gamma(\hat t)$. In particular, this holds for points $\gamma(t)$ with $t$  
in a small neighbourhood of $\hat t$. Therefore, $\hat t \in I$, and $I $ is also closed, that is, coincides with $[0,1]$.
\end{proof}
 
 To conclude the proof of (iii), assume that $w \in \Gamma^{sng}$. Let $w^m$ be a sequence of regular points in $S_q$ converging to $w$. 
 By Lemma \ref{reg}, $S_q \subset Q_{w^m} = \{ z: \rho(z,w^m) = 0 \}$ for every $m$. Passing to the limit we obtain that 
 $S_q \subset Q_w$.
\end{proof}

\subsection{From Segre varieties to differential equations} Let $A=\{r(z)=0\}$ be the complex line passing through the origin given by 
Proposition~\ref{l.A'}. Let $\Omega= \Omega' \times \Omega_n \subset \cx_{(z_1,...,z_{n-1})} \times \cx_{z_n}$ be a polydisc 
centred at the origin. Since by assumption $\dim_\C Q_0 = n-1$,  after a generic complex linear change of coordinates we have
\begin{eqnarray}
\label{nondegeneracy}
Q_0 \cap \{ (0',z_n) \} = \{ 0 \}
\end{eqnarray}
in a neighbourhood $\Omega$ of the origin. Shrinking  $\Omega$ if needed, we obtain that 
 for any $w\in \Omega$ the variety $Q_w \cap \Omega$ has a proper projection onto~$\Omega'$.
  We fix this  coordinate system in $\Omega$.

Consider the real analytic set 
\begin{equation}\label{e.x'}
X = \{(z,w) \in \Omega \times \Omega : \rho (z,\overline w)=0, \overline{r(w)}=0 \}.
\end{equation}
Denote by $\D = \{ \zeta \in \C: \vert \zeta \vert < 1 \}$ the unit disc in $\C$. Let 
$A:\mathbb D \ni t \to w(\bar t)=(w_1(\bar t),...,w_n(\bar t)) \in \Omega$, $w(0) = 0$,  be an anti $\C$-linear 
parametrization of the complex line $A$. Then the set $X$ uniquely defines the complex analytic set 
\begin{equation}\label{e.y'}
Y = \left\{(z,t)\subset \Omega \times \mathbb D : \hat\rho(z,t): = \rho(z,\overline{w(\bar t)})=0 \right\} .
\end{equation}
Intuitively, the set $Y$ can be thought of as the union of Segre varieties of points parametrized by the line $A$. 
Note that by the choice of $A$, the canonical projection $\tau: Y \to \Omega$, $\tau(z,t) \mapsto z$,  is a proper map in a 
neighbourhood of the origin because the fibre  $\tau^{-1}(0) \cap Y$ is discrete (note also that it contains the origin). Indeed, 
this fibre can be written in the form $\{ t: \rho(0,\overline{w(t)}) = 0 \}$ and in view of (\ref{e.x'}) is in a one-to-one correspondence 
with the set $\{ w: 0 \in Q_w, w \in A\}$. By Proposition~\ref{SegreProp}(a)  the latter set coincides with $\{ w \in A \cap Q_0 \}$ 
which is discrete by Proposition \ref{l.A'}(i) .

Therefore, by the Weierstrass Preparation Theorem, there exists a polydisc $U = U_z \times U_t \subset \Omega \times \mathbb D$ 
centred at the origin such that
\begin{eqnarray}
\label{FirstInt}
Y \cap U = \{(z,t) \in U: H(z,t) = 0 \},
\end{eqnarray}
 where $H$ is a Weierstrass polynomial in $t$:
\begin{equation}\label{e.H}
H(z,t) = t^k + h_{1}(z) t^{k-1}+ \cdots + h_{k}(z) 
\end{equation}
with the coefficients $h_j(z)$, $j=1,\dots,k$, holomorphic in $U_z$. Although in general this polynomial is reducible over the ring 
${\mathcal O}(U_z)$ of functions holomorphic in $U_z$, we can assume that it does not contain irreducible factors of multiplicity 
two or higher, as we can easily get rid of them. Indeed, consider the greatest common divisor $P$ of the polynomial $H$ and its 
derivative $\frac{d}{dt}H$. By Euclid's algorithm this is again a polynomial in $t$ with holomorphic coefficients. Then the quotient 
$H/P$ is a Weierstrass polynomial with the same divisors as $H$ but of multiplicity exactly equal to 1. This transformation does 
not affect the zero set of $H$. Hence, in what follows we assume that all divisors of $H$ are of multiplicity one, i.e., $H$
 has a decomposition
\begin{eqnarray}
\label{factors}
H(z,t) = H_1(z,t) \cdot ... \cdot H_m(z,t) ,
\end{eqnarray}
where $H_j$ are  polynomials of strictly positive degrees in $t$, irreducible over the ring  ${\mathcal O}(U_z)$, and such that
$H_k \neq H_j$ if $k \neq j$. Note also that 
$$H_j(0,0) = 0$$
for all $j$ (otherwise the factors would be invertible, and we can get rid of them by shrinking $U$). Furthermore, since 
$\{ z: H(z,0) = 0 \} = \{ z: \rho(z,0) = 0 \} = Q_0$, it follows from~\eqref{nondegeneracy} that for every $j$ the holomorphic 
function $z_n \mapsto H_j(0',z_n,0)$ does not vanish identically.

We now treat $z_n$ as a function of $z_1,...,z_{n-1}$, and denote $\frac{\partial z_n}{\partial z_j}=p_j$. For $j=1,...,n-1$, 
denote by $D_j$ {\it the total derivative operator} 
\begin{equation}\label{e.diff}
D_j = \frac{\partial}{\partial z_j} + p_j\frac{\partial}{\partial z_n}.
\end{equation}
Set $p= (p_1,...,p_{n-1})$, $z'=(z_1, \dots, z_{n-1})$, and let
\begin{equation}
G_j(z,t,p_j) := D_jH(z,t) = \frac{\partial}{ \partial z_j} H(z,t) + p_j \frac{\partial}{\partial z_n} H(z,t).
\end{equation}

Note that it follows from the above discussion that the derivative $\frac{\partial}{\partial z_n} H(z,t)$ does not vanish identically (in $z_n$). As a consequence, $G_j$ is a polynomial of degree 1 in $p_j$.

If $G_j$ is independent of $t$, then 
\begin{equation}\label{supl}
G_j(z,0,p_j)=0
\end{equation}
defines a partial differential equation  in $U$ with respect to the unknown function $z_n = z_n(z')$.
Suppose that some $G_j$ is a polynomial of strictly positive degree in  $t$. Consider the system of equations
\begin{equation}\label{e.system'}
\begin{cases}
H(z,t)=0,\\
G_j(z,t,p_j)=0
\end{cases}
\end{equation}
in variables $(z,t,p_j)$. Since both $H$ and $G_j$ are polynomials with respect to the variable $t$ and
the leading coefficient of $H(z,t)$ equals 1, a triple $(z,t,p_j)$ is a solution of system~\eqref{e.system'} 
if and only if the resultant $R(H,G_j)$ with respect to $t$ of $H$ and $G_j$ equals zero. Note that $R(H,G_j)$ is a polynomial in 
$p_j$ with holomorphic in $z$ coefficients of degree $d_j \le k$. 

\begin{lemma}\label{l.R'}
$R(H,G_j)(z,p_j) \not\equiv 0$.
\end{lemma}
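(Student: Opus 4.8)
The plan is to argue by contradiction using the standard characterization of the resultant. Since $H$ is monic in $t$ of degree $k\ge 1$, the resultant $R(H,G_j)=\mathrm{Res}_t(H,G_j)$ vanishes identically as an element of $\mathcal{O}(U_z)[p_j]$ if and only if $H$ and $G_j$ admit a common factor of positive degree in $t$ over the fraction field $K:=\mathrm{Frac}(\mathcal{O}(U_z)[p_j])$. So suppose $R(H,G_j)\equiv 0$. Now $K$ is a purely transcendental extension of the field $\mathcal{M}(U_z)$ of meromorphic functions, obtained by adjoining the independent variable $p_j$. By construction $H=H_1\cdots H_m$ is squarefree with the $H_i$ irreducible over $\mathcal{O}(U_z)$, hence over $\mathcal{M}(U_z)$ by Gauss's lemma (working in the UFD of germs after shrinking $U_z$ if necessary), and hence, since the $H_i$ do not involve $p_j$, over $K$ as well. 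Thus $H_1,\dots,H_m$ are exactly the distinct irreducible factors of $H$ in $K[t]$, and the hypothetical common factor must be divisible by some $H_i$; in particular $H_i \mid G_j = D_jH$ in $K[t]$.

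First I would reduce the divisibility $H_i\mid D_jH$ to a statement about $H_i$ alone. As $D_j=\partial/\partial z_j + p_j\,\partial/\partial z_n$ is a derivation, the Leibniz rule gives $D_jH=\sum_{l}(D_jH_l)\prod_{l'\ne l}H_{l'}$. Every summand with $l\ne i$ carries the factor $H_i$, so $H_i$ must divide the remaining term $(D_jH_i)\prod_{l'\ne i}H_{l'}$. Because $H$ is squarefree, $H_i$ is coprime to $\prod_{l'\ne i}H_{l'}$, and therefore $H_i\mid D_jH_i$ in $K[t]$.

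Next comes a degree count. We may take each irreducible factor $H_i$ to be monic in $t$ (the factors of a Weierstrass polynomial are Weierstrass polynomials up to units, and the product of their leading coefficients is $1$). Since $D_j$ differentiates only the coefficients and annihilates the leading monomial $t^{\deg_t H_i}$, the polynomial $D_jH_i$ has strictly smaller degree in $t$ than $H_i$. A nonzero multiple of $H_i$ cannot have degree below $\deg_t H_i$, so $H_i\mid D_jH_i$ forces $D_jH_i\equiv 0$. Writing $D_jH_i=\partial_{z_j}H_i+p_j\,\partial_{z_n}H_i$ and collecting powers of the free variable $p_j$, I conclude in particular that $\partial_{z_n}H_i\equiv 0$.

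The final step contradicts this. By the remarks preceding the lemma, $H_i(0,0)=0$ while the function $z_n\mapsto H_i(0',z_n,0)$ does not vanish identically; hence it is non-constant, and its derivative $z_n\mapsto \partial_{z_n}H_i(0',z_n,0)$ is not identically zero, which is incompatible with $\partial_{z_n}H_i\equiv 0$. This contradiction yields $R(H,G_j)\not\equiv 0$. I expect the main technical point to be the clean passage between vanishing of the resultant and existence of a common factor over the enlarged coefficient field $K$, namely verifying that adjoining the independent variable $p_j$ neither destroys squarefreeness nor creates new irreducible factors of $H$; once that is in place, the derivation and degree bookkeeping above is routine.
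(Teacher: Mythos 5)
Your proof is correct and follows essentially the same route as the paper's: the resultant vanishes iff $H$ and $G_j=D_jH$ share a nonconstant common factor in $t$, the Leibniz rule plus coprimality of the squarefree factors reduces this to $H_i \mid D_jH_i$, and the degree drop of $D_jH_i$ in $t$ together with $\partial_{z_n}H_i\not\equiv 0$ (which both you and the paper extract from $H_i(0,0)=0$ and $z_n\mapsto H_i(0',z_n,0)\not\equiv 0$) yields the contradiction. Your only addition is the explicit verification that the $H_i$ remain the distinct irreducible factors over $\mathrm{Frac}\bigl(\mathcal{O}(U_z)[p_j]\bigr)$, a point the paper leaves implicit.
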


\begin{proof} Since the ring ${\mathcal O}(U_z)$ is an  integral domain, the resultant $R(H,G_j)$ vanishes identically  if and only if
$H$ and $G_j$ have a non-constant common factor. Suppose, for instance, that $H_m$ divides $G_j$. We can assume that $H_m$ has 
the form (\ref{e.H}) (since in any case its leading coefficient in degree of $t$ is invertible). We have $G_j = D_jH = (D_jH_1) H_2...H_m +...+  H_1 H_2... (D_jH_m)$. Note that the polynomial $D_jH_m$ does not vanish identically because the derivative $\frac{\partial}{\partial z_n} H_m$ does not vanish identically in $z_n$.
Since $H_k$ and $H_m$ are coprime, we conclude that   $H_m$ divides $D_jH_m$. However $D_jH_m$
has a lower degree in $t$ than $H_m$.  This contradiction proves the lemma. 
\end{proof}

Letting $\Phi_j(z,p):= R(H,G_j)$ (or $\Phi_j := G_j$  if $G_j$ is independent of $t$) we obtain 
the system of partial differential equations
\begin{eqnarray}\label{ME1'}
\Phi_j(z,p_j) = 0, \ j=1,...,n-1.
\end{eqnarray}
This  system plays the key role in our approach.

\subsection{Extension of the Levi foliation and the first integral} Treating $p=(p_1,\dots,p_{n-1})$ as coordinates in $J^1(\Omega,\cx)$, system~\eqref{ME1'} defines a complex analytic 
subset $W$ of  $J^1(\Omega,\cx)\equiv \cx^n_z \times \cx^{n-1}_p$. Let $\pi: W \to \cx^n_z$ be the coordinate projection.

Our first claim is that $\pi(W)$ is an open subset of $\Omega$. Indeed, for a generic point 
$z^0 \in \Omega$ there exists a neighbourhood $U$ of $z^0$ and $t^0\in \mathbb D$ such that $z^0\in Q_{w(t^0)}$ 
and $Q_{w(t)} \cap U$ is a complex hypersurface for all $t$ close to $t_0$. After shrinking $U$ if necessary, we may 
assume that $Q_{w(t)} \cap U = \{ z_n = h_t (z')\},$ where $h_t$ is holomorphic. Then points $(z', h_t(z'), t)$ clearly 
satisfy~\eqref{FirstInt}. Applying operator $D_j$ to the identity $H((z',h_t(z')), t)\equiv0$ we obtain that the triple
$(z', h_t(z'), \frac{\partial h_t(z')}{\partial z_j})$ satisfies~\eqref{e.system'} for all $j=1,\dots, n-1$, and $t$ sufficiently
close to $t^0$. Then $z^0 \in \pi(z', h_t(z'), \frac{\partial h_t(z')}{\partial z_j})$, which shows that a generic point $z^0$
belongs to $\pi(W)$. 

Our second claim is that $\dim W = n$. Indeed, note that every $\Phi_j(z, p_j)$ has nontrivial dependence on $p_j$, as
otherwise, the set $\pi(W)\subset \{\Phi_j=0\}$, which contradicts  the previous claim. This shows that at a generic point,
the hypersurfaces $\{\Phi_j (z, p_j)=0\}$ meet in general position, and therefore, they define a complex analytic set of 
dimension $n$.

Now, observe that each $\Phi_j(z, p_j)$ has polynomial dependence on $p_j$. Indeed, each $G_j(z,t,p_j)$ depends linearly on
$p_j$, but the dependence becomes polynomial in $R(H, G_j)$. Therefore, a standard projectivization procedure defines
compactification of $W$ in $\PP T^*_n \Omega$ which we denote again by $W$ for simplicity. Thus, we obtain a complex
analytic subset of $\PP T^*_n \Omega$ of dimension $n$, which agrees with the solution of~\eqref{ME1'} in the affine
part. Its projection $\pi : W \to \Omega$ is proper with a discrete fibre at a generic point. In particular, it is surjective 
because $\pi(W)$ is a complex analytic subset of $\Omega$ of dimension $n$. Therefore, $W$ defines the required $d$-web 
in a neighbourhood $U_z$ of the origin. Note that by construction, every Segre variety $Q_w$ of a point $w \in A$ 
satisfies the system~(\ref{ME1'}). Since by Proposition~\ref{l.A'}(iii) every leaf of the Levi foliation $\mathcal L$ in $\G^* \cap U$  
is contained in the Segre variety of some point in $A$, we conclude that every leaf of $\mathcal L$ also satisfies this system.
This means the web is an extension of the Levi foliation. 

Finally, the set $Y$ defined by~\eqref{e.y'} has the proper projection $\tau : Y \to \Omega$, and this gives the first integral 
of the web given by $W$. This completes the proof of (a) in Theorem~\ref{t.1}.

\section{Proof of Theorem~\ref{t.1}, Case $(b)$}

The case when the origin is a nondicritical singularity of $\G$ is contained in part (a) of Theorem~\ref{t.1}, so 
assume that $0\in \G$ is dicritical. Since the defining function $\rho(z,\bar z)$ of $\G$ is polynomial,
every Segre variety is a (possibly reducible) algebraic variety, which after projectivization can be 
considered to be closed in $\mathbb CP^n$. By Bezout's theorem (see, for instance, \cite{Ch}), given a projective 
line $A$, which is not an irreducible component of any Segre variety of $\G$, the intersection of $A$ with any 
component of $Q_w$ is a discrete set provided that $w$ is not a dicritical singularity. For example, a 
generic complex line $A$ with $0\notin A$ has this property. Since the set of dicritical singularities of $\G$ has 
dimension at most $n-2$, after a small perturbation of $A$ 
we may assume that $A$ does not contain such singularities. Then the same proof as in Proposition~\ref{l.A'} shows 
that for a sufficiently small neighbourhood $\Omega$ of the origin, for any $q\in \Omega\cap \G^*$, 
the leaf $\mathcal L_q$ of the Levi foliation is contained in a Segre variety of some point in $A$.

Denote by $L_\infty$ the complex hyperplane at infinity in $\mathbb CP^n$, and let $a =A\cap L_\infty$. 
Let 
$$
\mathbb C \ni t\to A(\bar t)=(w_1(\bar t),..., w_n(\bar t)) \in \mathbb C^n
$$ 
be an anti $\C$-linear parametrization of the affine part of $A$. Without loss of generality we may assume that 
the component $w_n(\bar t)$ is not equal  identically to a constant. 
Then we conclude that for any $\mathcal L_q \not\subset Q_a$, there exists $t\in \mathbb C$ such that  
$\mathcal L_q \subset Q_{A(\bar t)}$. As in the proof of Theorem~\ref{t.1}, we construct the set 
$Y\subset \Omega \times \mathbb C$ given by the polynomial
$$
H(z,t) = \rho(z,\overline{w(\bar t)})=0.
$$
Note that since the origin is a dicritical singularity, the projection $\pi: Y \to \Omega$ is not proper because
$\pi^{-1}(0)$ has positive dimension; hence we are not in the hypothesis of the Weierstrass Preparation Theorem. 
But since $H (z,t)$ is already a polynomial in $t$ (and even in $z$), it still makes sense to consider
$R(H,G_j)$, the resultant of $H$ and the polynomial $G_j = D_jH$, where $D_j$ are as in \eqref{e.diff} (of course, we 
may assume that $H$ is not constant in the variable $z_n$). 
As above, we may also assume that $H$ does not contain multiple factors so that $R(H,G_j)$ does not vanish identically. 
The equalities $R(H,G_j)=0$, $j=1,...,n-1$,  give a system of partial differential equations  that defines a
$d$-web $\mathcal W$ for some $d\in \mathbb N$. By construction, all Segre varieties $Q_w$, $w\in \Omega\cap \G^*$,
satisfy this system, except possibly those that contain the point $a$ in the closure. But since $a$ is not a dicritical 
singularity, this is a finite set of Segre varieties, and it follows from the analytic dependence of Segre varieties on 
the parameter that the obtained differential equation is satisfied by all $Q_w$. In particular, this shows  
that $\mathcal W$ extends the foliation~$\mathcal L$. 

The set $Y$ defines a meromorphic correspondence  which is the first integral of~$\mathcal W$.
This proves Theorem~\ref{t.1}(b).


\section{Proof of Theorem~\ref{t.2}}

By the assumption of the theorem, $H (z,\zeta)$ admits expansion (\ref{rational}) with $\zeta \in \mathbb S$.  Complexifying in 
the variable $\zeta$, we obtain the function  $H(z,\zeta)$ defined for $(z,\zeta) \in \Omega \times \C\setminus\{0\}$ 
by (\ref{rational}). 

Consider the set 
$Y\subset \Omega \times \mathbb C\setminus \{0\}$ given by the equation $\{ (z,\zeta): H(z,\zeta) =0 \}$.
In general,  the projection $\pi: Y \to \Omega$ is not proper because the fibre 
$\pi^{-1}(0)$ can be of  positive dimension; hence, we are not in the hypothesis of the Weierstrass Preparation Theorem. 
Since the function $\hat H = \zeta^N H(z,\zeta)$ is  polynomial in $\zeta$,  we may consider the resultant 
$R(\hat H,G_j)$; as above, $G_j = D_j\hat H$, where $D_j$ are as in \eqref{e.diff}. Now we can conclude the proof  
as in the previous section. 

Note that in this construction the complex hypersurfaces $\mathcal F_\zeta := \{ z: H(z,\zeta) = 0 \}$ play the role
of the Segre varieties. At regular points of $\Gamma$ these hypersurfaces coincide with leaves of the Levi foliation.
As above, we can assume that $\hat H$ does not contain multiple factors so that $R$ does not vanish identically. 
The equalities $R(\hat H,G_j)=0$, $j=1,...,n-1$, again  give a system of partial differential equations, polynomial in $p_j$,  that defines a
$d$-web $\mathcal W$ for some $d\in \mathbb N$. By construction, all complex hypersurfaces $\mathcal F_\zeta$ satisfy this system. In particular, this shows  
that $\mathcal W$ extends the foliation~$\mathcal L$. 

The set $Y$ is algebraic in variable $\zeta$. Therefore, projectivization in the variable $\zeta$ gives the compactification
of $Y$ in $\Omega \times \cx P$. This defines a meromorphic correspondence  
which is the first integral of~$\mathcal W$. This proves Theorem~\ref{t.2}.

\section{Examples and Remarks}\label{s.examples}

\begin{ex}[Brunella~\cite{Bru1}]\label{e.B}
This example, discovered by M.~Brunella, shows that in general the Levi foliation of a Levi-flat hypersurface 
admits extension to a neighbourhood of a singular point only as a web, not as a singular
foliation. Consider the Levi-flat hypersurface
\begin{equation}\label{e.b1}
\G = \{z\in \mathbb C^2 : y_2^2=4(y_1^2+x_2)y_1^2\}.
\end{equation}
The singular locus of $\G$ is the set $\{y_1=y_2=0\}$. Its subset given by $\{y_1=y_2=0,\ x_2 <0\}$
is a ``stick", i.e., it does not belong to the closure of smooth points of $\Gamma$. The Segre varieties of 
$\G$ are given by
\begin{equation}\label{e.b2}
Q_w = \{z\in\mathbb C^2: (z_2-\bar w_2)^2+(z_1-\bar w_1)^4 - 2(z_2+\bar w_2)(z_1 -\bar w_1)^2=0\}.
\end{equation}
We see that $Q_0= \{z_2^2 +z_1^4 - 2 z_2 z_1^2=0\}$, and the origin is a nondicritical singularity. Following
the algorithm in the proof of Theorem~\ref{t.1} we choose $A(t)$ to be given by $w_1=0$, $w_2=\bar t$. 
Then~\eqref{e.b2} becomes
$$
(z_2-t)^2 + z_1^4 - 2z_1^2(z_2+t)=0.
$$
After differentiation with respect to $z_1$, and using the notation $p=\frac{dz_2}{dz_1}$ we obtain
$$
2(z_2-t)p+4z_1^3 - 4z_1(z_2+t) - 2z_1^2p=0 .
$$
Direct calculation shows that the resultant of the two polynomials in $t$ above vanishes (after dropping 
irrelevant factors) when
\begin{equation}\label{e.bb}
p^2 = 4z_2.
\end{equation}
This is the 2-web that extends the Levi foliation of $\G^*$. Its behaviour is described in Example~\ref{ex.brunella}.
Note that the exceptional set $\{z_2=0\}$ intersects $\Gamma$ along the line $\{z_2=y_1=0\}\subset \G^{sng}$.

By inspection of solutions of~\eqref{e.bb} we see that a first integral of $\G$ can be taken to be
$$f(z_1,z_2)=z_1 \pm \sqrt{z_2} ,$$ 
where $f$ is understood as a multiple-valued $1-2$ map. In fact, one can immediately verify that the closure of 
the smooth points of $\Gamma$ is given by
$$
\{z\in \mathbb C^2 : {\rm Im\,}(z_1 \pm  \sqrt{z_2}) =0 \} =
\{{\rm Im\,}(z_1 +  \sqrt{z_2})\} \cup \{{\rm Im\,}(z_1 - \sqrt{z_2})\} .
$$
However, the points of the stick cannot be recovered from the first integral. $\diamond$
\end{ex}

\begin{rem}
It is a separate question whether a Levi-flat hypersurface can be given as the preimage of a 
real analytic curve under a first integral. As discussed above, this is not the case in
Brunella's example. In fact, this is a general phenomenon of all umbrellas
(by an umbrella we mean an irreducible real analytic hypersurface that contains points near which it is a 
smooth manifold of dimension less than $2n-1$). Indeed, let $\Gamma$ be a Levi-flat hypersurface
that admits a (single or multiple-valued) meromorphic integral $f : U \to \cx P^1$ for a neighbourhood $U$ of a
singular point near which $\G$ is an umbrella. Suppose that there exists a 
connected real analytic curve $\gamma \subset f(U)$ such that $\Gamma\cap U$ can be given
as $f^{-1}(\gamma)$. 
If $a\in \G$ is a point in the stick of $\G$, and $a' \in f(a) \subset \gamma$, then all points in the stick belong 
to $f^{-1}(a')$, i.e., the stick is a complex hypersurface $f^{-1}(a')$. However, for points in $\gamma$ 
close to $a'$, their preimage under $f$ has no intersection with a neighbourhood of $a$. Hence, $a'$
is an isolated point in the image of $\Gamma$, which shows that $\G$ cannot be given as a preimage of a
curve. We will discuss this again in Example~\ref{ex.BG2} below.
\end{rem}

\begin{ex}\label{e.s}
Consider the set given in $\mathbb C^2$ by the equation
$$
\left\{{\rm Re\,}( z_1 \pm 1/\sqrt{z_2} ) =0 \right\},
$$
where $\pm$ is understood as in the previous example. We may get rid of the radical by squaring
the equation twice to obtain 
\begin{equation}\label{e.sh}
\G = \left\{ \left(|z_2|^2(z_1 + \bar z_1)^2-(z_2+\bar z_2)\right)^2 = 4 |z_2|^2 \right\}.
\end{equation}
It is easy to verify that $\G$ is an irreducible real analytic Levi flat hypersurface.
The corresponding Segre varieties are given by
$$
Q_w  = \left\{z\in \mathbb C^2 : \left (z_2 \bar w_2 (z_1 +\bar w_1) - (z_2 + \bar w_2) \right)^2 = 4 z_2 \bar w_2\right\}.
$$
Since $Q_0 = \{z_2=0\}$, the origin is a nondicritical singularity. Note that $\G^{sng} = Q_0$. Thus, the complex line
$t \to (0,\bar t)$ satisfies the requirements of Lemma~\ref{l.A'}. We obtain the following system of equations:
$$
\begin{cases}
H(z,t)=t^2(1-z_2z_1^2)^2 - 2tz_2(1+z_2z_1^2) + z_2^2=0,\\
G(z,t,p)=-2t^2(1-z_2z_1)(pz_1^2 + 2z_2z_1)-2t \left(p(1+z_2z_1^2) + z_2(pz_1^2+2z_2z_1)\right) + 2z_2 p =0.
\end{cases}
$$
Note that the coefficient $(1-z_2z_1^2)$ of $t^2$ in $H$ does not vanish at the origin, and thus, after division by  
$(1-z_2z_1^2)$, the function $H(z,t)$ defines a Weierstrass  polynomial in a neighbourhood of the origin. The resultant of 
$H$ and $R$ equals
$$
R(H,G)= -16z_1^2 z_2^3 (-p^2+4 z_2^3)(-1+z_2 z_1^2).
$$
The relevant factor $-p^2+4 z_2^3$ gives the differential equation that defines the $2$-web extending the Levi
foliation of $\G^*$:
$$
\left(\frac{d z_2}{d z_1} \right)^2=4 z_2^3 .
$$
The 1-2 map $f(z)= z_1 \pm \frac{1}{\sqrt z_2}$ is a  first integral, which is, of course,
the function that was used in the definition of $\G$. Finally note, that although $f(z)$ appears to be meromorphic, after a 
holomorphic change of coordinates in $\mathbb CP$, $f$ becomes holomorphic.
$\diamond$\end{ex}

\begin{ex}
Consider the hypersurface $\G$ given by 
$$
|z_1|^2 - |z_2|^2 = 0.
$$
Here, the origin is a dicritical singularity, but $\G$ is algebraic. The line $A=\{z_2=1\}$ is not a Segre variety of $\G$.
By Bezout's theorem, after projectivization, $A$  intersects every (projectivized) Segre variety $Q_w$, $w\ne 0$, at 
one point. The differential equation that gives the extension of the Levi foliation on $\G^*$ is given by 
$$
\frac{dz_2}{dz_1} = \frac{z_2}{z_1}
$$
with the meromorphic first integral equal to $f(z)= \frac{z_2}{z_1}$. The hypersurface $\G$ is then given by
$|f(z)|=1$.
$\diamond$\end{ex}

\begin{ex}\label{e.BG}
Consider the Levi flat hypersurface in $\cx^n$ given by 
\begin{equation}\label{e.realcone}
\Gamma = \left\{ {\rm Re\,}(z_1^2+\dots+z_n^2)=0 \right\}.
\end{equation}
According to~\cite{BuGo} any Levi flat hypersurface given by ${\rm Re\ }(z^2_1 +z^2_2 +... +z^2_n) +H(z,z)=0$,
where $H(z, z) = O(|z|^3)$, $H(z, \overline z) = \overline H (\overline z, z)$, can be transformed into~\eqref{e.realcone}
by a local biholomorphic change of coordinates at the origin. The Segre variety $Q_0 = \{z_1^2+\dots + z_n^2 =0\}$,
hence, the origin is an isolated nondicritical singularity. Choose the complex line $A(t)$ given by $t \to (0, \dots, 0, \bar t)$. 
Plugging this into the general equation for Segre varieties we obtain
$$
H(z,t)=z_1^2 + \dots + z_n^2 + t^2 =0.
$$
Treating $z_n$ as a function of $z'=(z_1, \dots, z_{n-1})$, and applying the operators $D_j$ from~\eqref{e.diff} to  
the function $H(z,t)$ we obtain equations
$$
G_j(z,p_j) = z_j + z_n p_j =0, \ \  j=1,\dots, n-1.
$$
Note that none of $G_j$ depends on $t$, so instead of taking the resultant, we simply consider the system
$G_j(z,p_j)=0$, which defines a complex analytic subset $\tilde W$ of $J_1(\cx^{n-1},\cx)$ of dimension $n$. The corresponding
compactification of $\tilde W$ in $\mathbb PT^*_n$ is given by 
$$
W = \left\{ z_j \hat p_0 + z_n \hat p_j =0, \ \ j=1,\dots, n-1 \right\},
$$
where $[\hat p_0,\dots, \hat p_{n-1}]$ are homogeneous coordinates in the compactified fibres, and $p_j = \hat p_j / \hat p_0$. 
Let $\pi: W \to \cx^n$ be the coordinate projection. Then $\pi^{-1}(0) \cong \cx P^{n-1}$, and $\pi^{-1}(z) \cong \cx P^{n-2}$
for all $z$ of the form $z=(z',0)$, $z'\ne 0$. Outside the locus $z_n=0$, the map $\pi$ is a biholomorphism. Note that the projection
$\pi$ is proper, in particular, it is surjective, and thus defines a web in $\cx^n$. In fact, the web is a foliation with an isolated 
singularity at the origin. To see this recall that the foliation on the regular part of $W$ is given by the form $\theta$ as in~\eqref{e.cont2}.
It pullback to $\cx^n$, $\theta^* = (\pi^{-1})^* \theta$, is given by 
$$
\theta^* = dz_n - \sum_{j=1}^{n-1} \left( - \frac{z_j}{z_n} \right) dz_j ,
$$
which we can simply write as $\theta^* = z_1 dz_1 + \dots + z_n dz_n$. Clearly, $\theta^*$ generates a foliation whose leaves are
complex hypersurfaces of the form $\{z_1^2+\dots+ z_n^2 +c =0\}$. These are precisely the Segre varieties of $\G$ in~\eqref{e.realcone}.
The Segre variety $Q_0$ is the only variety which is nonsmooth (at the origin), and so the origin is the only singularity of the 
foliation. 

Finally, observe that the set $W\subset \cx T^*_n$ has fibres of positive dimension even when the Levi foliation of $\Gamma$
extends as a foliation, not as a web, and that not every point in $\cx^n$ over which the fibre has positive dimension is necessarily
a singular point of the extended foliation. Further, the first integral given by $H(z,t)=0$ that we obtained following
the general algorithm is multiple-valued. However, a composition with $t \to t^2$ gives a single-valued first integral. As a result,
the extension of the Levi foliation does not branch, i.e., is a $1$-web. $\diamond$
\end{ex}

\begin{ex}[cf.~\cite{BuGo}]\label{ex.BG2}
Consider the hypersurface 
\begin{equation}
\G = \{z \in \cx^2 : x_1^2 + y_1^3 =0\}.
\end{equation} 
It is Levi flat since it is foliated by by complex lines parallel to $z_2$-axis. Burns and Gong~\cite{BuGo} showed that this Levi flat cannot be
given in the form $\{{\rm Re\,}f=0\}$, where $f$ is a meromorphic or holomorphic function.  Segre varieties of $\G$ are given by
$$
Q_w = \{2i(z_1+\bar w_1)^2 -(z_1- \bar w_1)^3 =0\}.
$$
We have $Q_0 = \{z_1=0\}\cup \{z_1=2i\}$, where the second component can be disregarded from local considerations because
it does not pass through the origin. As computations show, for points $w \in \G$ close to the origin, the Segre variety $Q_w$ consists 
of two components that are close to $\{z_1=0\}$, and only one of them is contained in $\G$. 

In this example it is natural to consider Segre varieties as graphs $z_1 = z_1(z_2)$. Let $A(t)$ be given 
by $t \to (\bar t, 0)$. Then
\begin{equation}\label{e.HH}
H(z,t) = t^3 + t^2(2i-3z_1) + t(3z_1^2 + 4iz_1) + (2iz_1^2-z_1^3).
\end{equation}
Applying the total derivative operator to $H$ we obtain a polynomial $G(z,t,p)$ with the property that all of its monomials have degree 
one in $p$. This means that $R(H,G)$, the resultant of $H$ and $G$, factors out a power of $p$. Therefore, $R(H,G)$ vanishes if $p=0$. 
This gives a trivial extension of the Levi foliation on $\G$ with the leaves of the form $\{z_1={\rm const}\}$, with the corresponding 
first integral given by $f(z) = z_1$. 

From this we see that $\Gamma = f^{-1}(\gamma)$, where $\gamma = \{ t = u+iv \in \cx: u^2 + iv^3 =0\}$. For comparison, 
the first integral given by equation~\eqref{e.HH} is a $1-3$ map, which by  construction attains the same value on all components of $Q_w$. 
Because for $w\in\G$, not all components of $Q_w$ are contained in $\Gamma$, the preimage of $\gamma$ by the map given
by~\eqref{e.HH} contains more points in $\cx^2$ than $\Gamma$.  
$\diamond$\end{ex}

We further remark that for a first integral $f$ constructed from the complex curve $A(t)$ as in the proof of Theorem~\ref{t.1}, the
Levi flat $\G$ cannot be in general given in the form $\G = f^{-1}(\gamma)$ for a real analytic curve $\gamma$. If for a generic
point $w\in \G$, the Segre variety $Q_w$ contains several components and not all of them are contained in $\G$, then the set
$f^{-1}(\gamma)$ is only subanalytic (or semialgebraic in the algebraic category), but it will contain $\G$ as a component. 
The curve $\gamma$ can be taken to be the preimage of $A\cap \Gamma$ under the parametrization of $A(t)$.

\begin{rem}
Our result should be also compared with Brunella~\cite{Bru1}, where he
 gives extension of $\mathcal L$ by considering the map from the regular part of $\G$ into its projectivized cotangent bundle $PT^*\G$. 
 It is defined by sending a smooth point $p$ in $\G$ to its complex tangent. The image of $\G$ under this map gives a $2n-1$-dimensional 
 real analytic subset of 
 $\PP T^*\C^n$, the projectivized cotangent bundle of $\mathbb C^n$. The crucial step in his construction is to show that this real analytic
 set is contained in a complex analytic subset  of $\PP T^*\C^n$ of dimension $n$. This set is obtained from abstract set-theoretical 
 considerations, and in general it is not clear whether it has a good projection into $\mathbb C^n$. Without this, Brunella's approach 
 only gives an extension of the Levi foliation on $\Gamma$ by considering some complex manifold $Y$ of dimension $n$, a Levi flat 
 hypersurface $N \subset Y$, and a holomorphic map $\pi : Y \to \mathbb C^n$ whose restriction to some open subset of $N$ is a proper 
 map onto the closure of smooth points in $\G$. This construction does not immediately give extension of $\mathcal L$ as a web to a 
 neighbourhood of a singular point of $\G$ in $\mathbb C^n$.
 \end{rem}


\end{document}